\documentclass{svjour3}
\usepackage{fix-cm}
\usepackage[a4paper]{geometry}

\usepackage{mathtools}
\usepackage{xcolor}
\usepackage{graphicx}
\usepackage[numbers]{natbib}
\usepackage{subfig}
\usepackage{caption}
\captionsetup{compatibility=false}

\usepackage{amssymb}
\usepackage{amsmath}

\usepackage{amsxtra} 
\usepackage{amstext}
\usepackage{amssymb}
\usepackage{amsfonts}
\usepackage{multicol}
\usepackage{setspace}

\usepackage[version=4]{mhchem}
\usepackage{float}
\usepackage[super]{cite}
\usepackage{url}

\usepackage{hyperref}


\newcommand{\set}[1]{\left\{#1\right\}}

\newcommand{\R}{\mathbb R}

\newcommand{\C}{\mathcal C}

\newcommand{\Z}{\mathbb Z}
\newcommand{\Q}{\mathbb Q}

\def\R{\mathbb{R}}

\def\C{\mathbb{C}}
\def\Z{\mathbb{Z}}
\def\Q{\mathbb{Q}}

\def\z{\mathbb{Z}_p}
\def\q{\mathbb{Q}_p}

\def\-{\setminus}

\def \ord{\mathrm{ord}}
\def \tes{\mathcal{D}}

\newcommand\p[1]{\left(  #1 \right) }

\newcommand\Nor[1]{ \left\Vert #1 \right\Vert}

\newcommand\no[1]{ \left\vert #1 \right\vert}

\newcommand\scalemath[2]{\scalebox{#1}{\mbox{\ensuremath{\displaystyle #2}}}}

\DeclareUnicodeCharacter{2212}{−}

\usepackage[compatibility=false]{caption}

\usepackage{caption}
\captionsetup{compatibility=false}
\begin{document}
\title{A \texorpdfstring{$p$}{p}-adic Reaction-Diffusion Model of Branching Coral Growth and Calcification Dynamics}

\author{Angela Fuquen-Tibatá \and Yuriria Cortés-Poza \and J. Rogelio Pérez-Buendía}

\institute{Angela Fuquen-Tibatá\at IIMAS, Unidad Académica de Yucatán, Universidad Nacional Autónoma de México (UNAM), Yuc., México\\
\email{angela.fuquen@iimas.unam.mx}
\and
Yuriria Cortés-Poza\at IIMAS, Unidad Académica de Yucatán, Universidad Nacional Autónoma de México (UNAM), Yuc., México \\
\email{yuriria.cortes@iimas.unam.mx}
\and 
J. Rogelio Pérez-Buendía \at IXM-Secihti Centro de Investigación en Matemáticas (Cimat), Unidad Mérida, Yuc., México\\
\email{rogelio.perez@cimat.mx}
}
\date{\today }

\maketitle

\begin{abstract}
Coral colonies exhibit complex, self-similar branching architectures shaped by biochemical interactions and environmental constraints. 
To model their growth and calcification dynamics, we propose a novel reaction–diffusion framework defined over $p$-adic ultrametric spaces. 
The model incorporates biologically grounded reactions involving calcium and bicarbonate ions, whose interplay drives the precipitation of calcium carbonate (\ce{CaCO3}). 
Nonlocal diffusion is governed by the Vladimirov operator over the $p$-adic integers, naturally capturing the hierarchical geometry of branching coral structures. 
Discretization over $p$-adic balls yields a high-dimensional nonlinear ODE system, which we solve numerically to examine how environmental and kinetic parameters—particularly \ce{CO2} concentration—influence morphogenetic outcomes. 
The resulting simulations reproduce structurally diverse and biologically plausible branching patterns. 
This approach bridges non-Archimedean analysis with morphogenesis modeling and provides a mathematically rigorous framework for investigating hierarchical structure formation in developmental biology.
\end{abstract}

\keywords{Coral growth modeling \and p-adic analysis \and morphogenesis model \and reaction–diffusion systems \and hierarchical morphogenesis}
\newpage

\section{Introduction}
\label{sec:intro}

Coral reefs are among the most biologically diverse and economically significant ecosystems, providing food, habitat, and coastal protection to numerous marine species \citep{fisher_species_2015,moberg_ecological_1999, YUAN2024, carlot_coral_2023}. However, these ecosystems are increasingly threatened by climate change, ocean acidification, and pollution see e.g., \citep{hoegh-guldberg_coral_2017,hughes_coral_2017, Terry2018}. A deeper understanding of coral growth dynamics—particularly under varying environmental conditions—is essential for developing effective strategies for their conservation and restoration.

Mathematical models of coral growth have often been formulated within Euclidean frameworks, primarily to investigate the influence of environmental factors. 
For instance, studies have focused on the effect of external water flow on coral morphology. 
Mistr and Bercovici \citep{mistr_theoretical_2003} developed a porous-flow model showing that unidirectional currents can orient and space coral colonies, while Merks et al. in \citep{MERKS2003} used computational simulations to demonstrate that hydrodynamic transport of nutrients can drive morphological plasticity. 
These works successfully highlight the role of external flow on large-scale reef architecture. 
Nakamura and Nakamori \citep{nakamura_geochemical_2007} proposed a geochemical diffusion model in which reef topography emerges from  the coupling between carbonate chemistry, light-enhanced calcification, and the diffusive transport of dissolved inorganic carbon. 
More recently, Llabrés et al. \citep{Llabrs2024} developed an agent-based numerical framework for clonal coral growth,  illustrating how simple local growth rules at the polyp scale can reproduce the diversity of colony morphologies observed in scleractinian species. However, they do not examine the internal, biochemical regulation of calcification that operates within the coral's own tissue and skeletal structure.

Biologically, calcification in corals occurs in the extracellular calcifying medium (ECM) beneath the calicoblastic epithelium \citep{Tambutt2011, venn2025,crovetto_spatial_2024}. In our model, this process is represented by a reaction term within a reaction–diffusion framework. We consider branching corals in which ions diffuse through the gastrovascular canals that interconnect polyps along each branch. The reaction term represents the precipitation of calcium carbonate, representing the effective transformation of these ions into skeletal material. The assumption of internal diffusion through the gastrovascular system is supported by experimental evidence showing circulation of fluids between polyps in Acropora cervicornis \citep{Gladfelter1983}.

This work introduces a $p$-adic reaction–diffusion model designed to capture aspects of branching morphogenesis in corals within a non-Archimedean, ultrametric space. The model is formulated over the ring of $p$-adic integers $\mathbb{Z}_p$, whose hierarchical, tree-like topology provides an abstract but natural mathematical substrate for representing ramified biological structures. We incorporate a simplified set of reactions representing essential steps in calcium carbonate (\ce{CaCO3}) formation, and we employ the Vladimirov operator ${\boldsymbol{D}}^{\alpha}$—a $p$-adic pseudodifferential analog of the fractional Laplacian—to model nonlocal diffusion across the branching structure. The concentrations of carbonate (\ce{CO3^{2-}}), calcium (\ce{Ca^{2+}}), and calcium carbonate (\ce{CaCO3}) are described by real-valued functions defined on $\mathbb{R} \times \mathbb{Z}_p$, with time as a real variable and space represented by a $p$-adic integer.
Diffusion in ultrametric spaces has been widely explored in the context of hierarchical energy landscapes \citep{Avetisov_2002, KHRENNIKOV2021} and, more recently, in reaction–diffusion systems on networks and hierarchical models \citep{zuniga-galindo_reaction-diffusion_2020, chacon-cortes_turing_2023, zuniga-galindo_hierarchical_2024}. Building on these developments, our study applies ultrametric diffusion to a branching biological structure—corals. To our knowledge, this represents among the first attempts to use a $p$-adic framework to investigate a living branching system, 
offering a novel mathematical perspective on the internal chemical dynamics underlying coral calcification.

We discretize the model by partitioning $\mathbb{Z}_p$ into a finite collection of disjoint balls, each representing a coral branch. The resulting discretized problem reduces to solving a system of coupled ordinary differential equations that describe the dynamics of calcification within each branch. In this formulation, the Vladimirov operator encodes the diffusive coupling between branches, capturing the exchange of chemical species across the coral network. Numerical simulations using standard ODE solvers allow us to follow the temporal evolution of concentrations within each branch and to explore how parameter variations affect the calcification dynamics.

The model incorporates a dynamic bifurcation rule triggered by local \ce{CaCO3} accumulation, as well as a biologically motivated halting condition based on saturation thresholds. These features enable us to simulate the progressive development and eventual cessation of coral growth in response to environmental constraints.

It is important to emphasize that the prime parameter $p$ in our formulation is a mathematical requirement, not a biological constraint on coral branching. 
The prime $p$ defines the ultrametric tree structure of $\mathbb{Z}_p$, providing a rigorous scaffold on which nonlocal diffusion can be consistently formulated. 
The framework applies equally to any prime $p$, offering a general representation of hierarchical organization. 
Our aim is to establish a minimal yet mathematically consistent framework that introduces $p$-adic ultrametric diffusion into morphogenesis modeling.

Compared to Euclidean approaches, our framework offers three key advantages. First, incorporating explicit reaction kinetics provides a biologically grounded mechanism for calcification (see~\autoref{Biochemical_model}). Second, the ultrametric structure of $\mathbb{Z}_p$ naturally encodes branching geometries and enables diffusion across all hierarchical levels (see ~\autoref{Numerical_sol}). Third, the use of $p$-adic pseudodifferential operators expands the toolkit available for modeling self-organizing processes in hierarchical biological systems.

This article is organized as follows. 
Section~\ref{Mat_Biol_back} introduces the fundamental concepts of $p$-adic variables and their relevance for modeling ramified structures and reviews essential aspects of coral biology. Section~\ref{sec:model} presents the construction of our reaction-diffusion model. Section~\ref{Numerical_sol} details the discretization procedure, numerical solutions, and coral growth simulations. Technical aspects of $p$-adic analysis and operator theory are deferred to ~\autoref{appe}.

\section{Mathematical and Biological Background}\label{Mat_Biol_back}

\subsection{\texorpdfstring{$p$}{p}-adic Variables in the Study of Ramifications}\label{Intro_padics}

Branching structures are pervasive in trees, vascular systems, river networks, and coral colonies. Modeling such systems requires geometric information and a way to encode hierarchical relationships among branches. The space of $p$-adic numbers provides a natural mathematical framework, offering a topological and algebraic structure that reflects such hierarchies. Recent works in mathematical biology have begun to explore the potential of $p$-adic analysis for modeling complex biological systems with ramified architectures; see \citep{dragovich_p-adic_2021} for an overview.

The field of $p$-adic numbers, denoted $\mathbb{Q}_p$, is defined as the completion of the rational numbers $\mathbb{Q}$ with respect to the $p$-adic absolute value.  For $x \neq 0$, this valuation is given by $|x|_p = p^{-m}$, where $m$ is the unique integer such that $x = p^{m} \frac{a}{b}$ with $a,b$ integers not divisible by $p$. This induces a non-Archimedean (ultrametric) distance:
\begin{equation}
    d_p(x, y) = |x - y|_p,
\end{equation}
which groups points not by proximity in the Euclidean sense but by the similarity of their divisibility structure.

The closed unit ball centered at $0$ under this metric is the set of \emph{$p$-adic integers}, denoted $\mathbb{Z}_p$. 
Each $x \in \mathbb{Z}_p$ admits a unique expansion:
\begin{equation}\label{expan}
    x = a_0 + a_1 p + a_2 p^2 + \cdots, \quad a_i \in \{0, 1, \ldots, p-1\},
\end{equation}
which naturally defines a tree-like structure: each digit $a_i$ determines the branch taken at level $i$ in an infinite $p$-ary tree. 
The first digit $a_0$ corresponds to one of $p$ branches from the root, each of which further splits according to $a_1$, and so on. 
Every infinite path in this tree uniquely identifies a $p$-adic integer.

\begin{figure}[ht]
    \centering
    \includegraphics[width=0.7\linewidth]{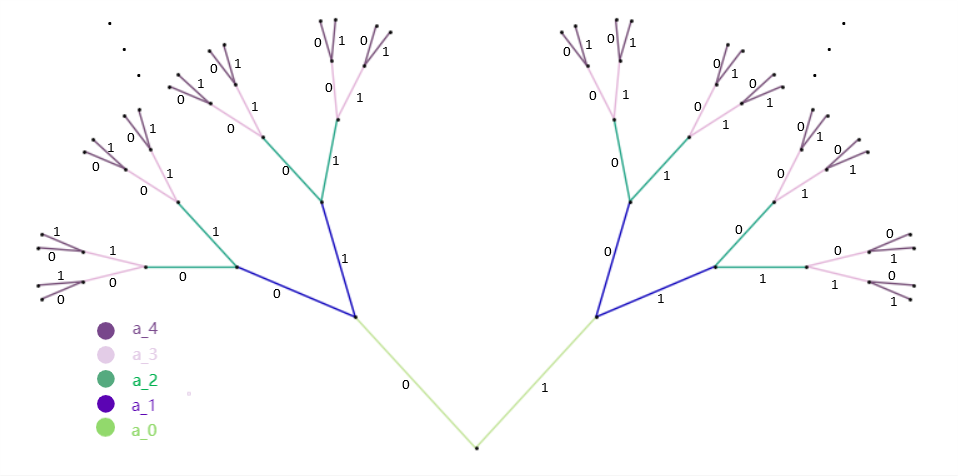}
    \caption{First five levels of the binary tree representing $\mathbb{Z}_2$ via the expansion~\eqref{expan}. Each node corresponds to a coefficient $a_i \in \{0,1\}$.}
    \label{arbol}
\end{figure}

For $p=2$, the tree begins with two branches ($a_0 = 0$ or $1$), and each subsequent level again bifurcates. Figure~\ref{arbol} illustrates this process for five levels. Analogous trees for $p=3$ and $p=5$ are shown in~\autoref{branching_structures}.

\begin{figure}[ht]
\centering
\subfloat[First four levels of the $3$-adic tree.\label{Z_3}]{
  \includegraphics[width=0.45\linewidth]{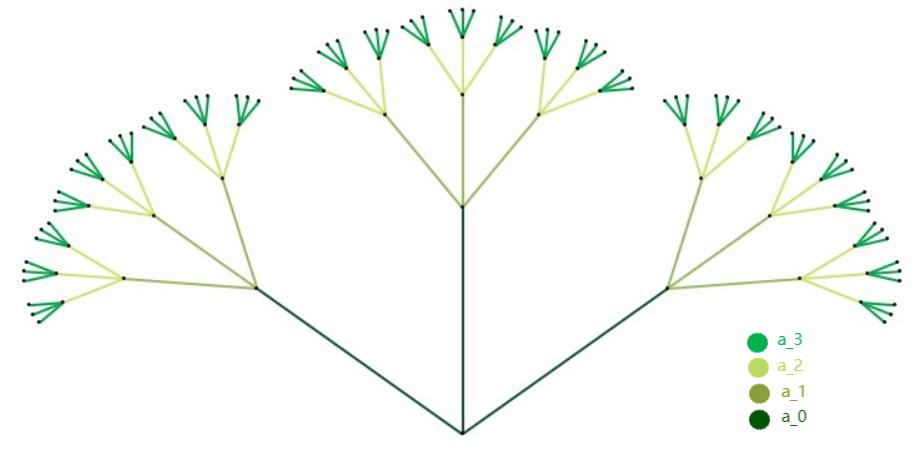}
}
\hfill
\subfloat[First three levels of the $5$-adic tree.\label{Z_5}]{
  \includegraphics[width=0.45\linewidth]{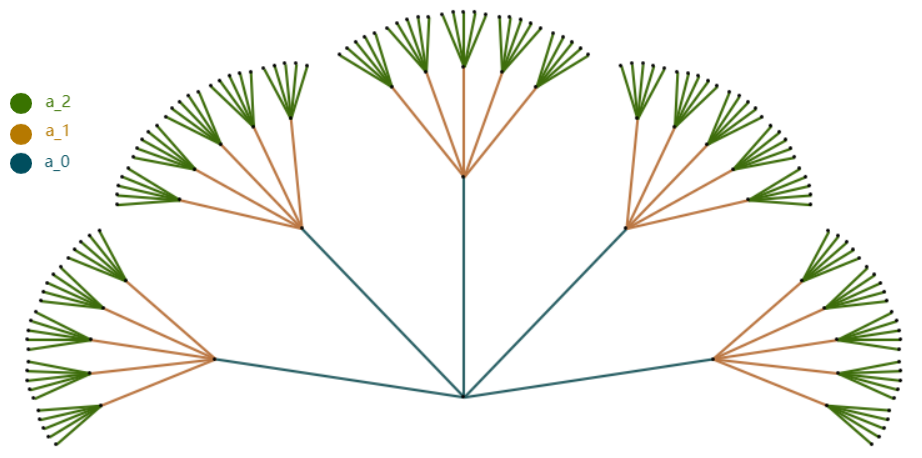}
}
\caption{Hierarchical trees corresponding to $\mathbb{Z}_3$ and $\mathbb{Z}_5$. Each level encodes a digit $a_i \in \{0,\dots,p-1\}$.}
\label{branching_structures}
\end{figure}

Each level of the $p$-adic tree corresponds to a partition of $\mathbb{Z}_p$ into disjoint balls of smaller radius. 

We take the set $G_m$ of $p$-adic integers defined in~\autoref{Intro_padics}, which indexes the $p^m$ disjoint balls $B_{-m}(i)$ representing the branches of the coral.

Then $\mathbb{Z}_p$ can be written as the disjoint union:
\begin{equation}
\mathbb{Z}_p = \bigsqcup_{a \in G_m} B_{-m}(a),
\end{equation}
where $B_{-m}(a)$ denotes the closed ball of radius $p^{-m}$ centered at $a$. These nested partitions reflect the hierarchical structure of $\mathbb{Z}_p$ and can be visualized as successive subdivisions of the tree at each level, see~\autoref{nested_balls}.

\begin{figure}[ht]
\centering
\subfloat[$\mathbb{Z}_2$ as a union of dyadic balls.\label{nested_Z2}]{
  \includegraphics[width=0.45\linewidth]{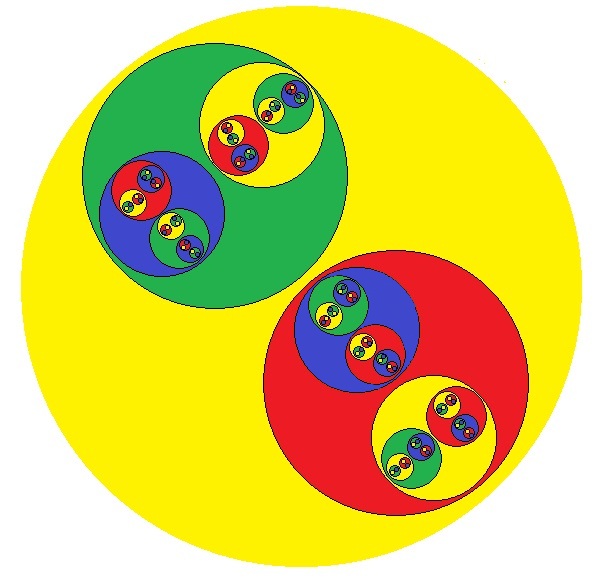}
}
\hfill
\subfloat[$\mathbb{Z}_5$ as a union of five-ary balls.\label{nested_Z5}]{
  \includegraphics[width=0.45\linewidth]{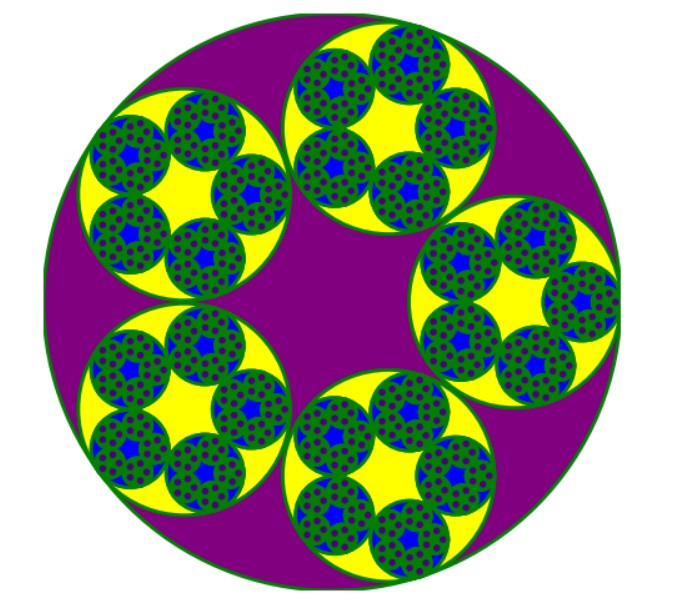}
}
\caption{Fractal-like nesting of $p$-adic balls: each ball contains $p$ disjoint sub-balls at the next level.}
\label{nested_balls}
\end{figure}

In our model, we use $\mathbb{Z}_p$ to represent the branching directions of coral structures. The ultrametric distance $d_p$ quantifies the similarity between branches in terms of shared bifurcation levels. Two branches that share more common digits (i.e., lie in the same larger ball) are considered "closer" than those that diverge earlier in the expansion.

Diffusion in this hierarchical space is described using the Vladimirov operator, a pseudodifferential operator defined for $\alpha > 0$ by:
\begin{equation}
\boldsymbol{D}^\alpha u(x) := \frac{1 - p^{\alpha}}{1 - p^{-\alpha - 1}} \int_{\mathbb{Q}_p} \frac{u(y) - u(x)}{|x - y|_p^{1 + \alpha}},dy.
\end{equation}

The restriction of $\boldsymbol{D}^\alpha$ to the space of locally constant functions supported in $\mathbb{Z}_p$ is denoted by $\overline{\boldsymbol{D}}^\alpha$ (see~\autoref{appe}).

In this context, $\boldsymbol{D}^\alpha$ acts as a $p$-adic analog of the fractional Laplacian, capturing the propagation of information (in our case, chemical species) across all tree scales.

The $p$-adic formalism offers multiple advantages for modeling branching:
\begin{itemize}
    \item \textit{Hierarchical geometry}: the metric structure of $\mathbb{Z}_p$ naturally encodes tree-like relationships.
    \item \textit{Nonlocal diffusion}: The Vladimirov operator enables chemical interactions across the entire hierarchy, not just among neighboring branches.
    \item \textit{Compactness and scalability}: the compactness of $\mathbb{Z}_p$ allows for efficient discretization and simulation.
\end{itemize}

These properties make $p$-adic spaces a powerful setting for modeling coral branching and other forms of biological morphogenesis with recursive spatial organization.

\subsection{Coral Biology}\label{coral_review}

Corals are marine invertebrates belonging to the phylum Cnidaria, along with jellyfish and sea anemones. Among their most remarkable features is their ability to build extensive calcium carbonate (\ce{CaCO3}) skeletons, which form the structural basis of coral reefs. These reef systems are among the planet's most diverse and productive ecosystems, supporting approximately 25\% of marine species~\citep{fisher_species_2015}.

Coral colonies comprise numerous polyps—small, genetically identical animals—connected by a tissue layer called the coenosarc, through which nutrients are distributed. Calcification occurs at the base of each polyp in the extracytoplasmic calcifying fluid (ECF), a microenvironment whose ionic composition is regulated by the coral~\citep{tambutte_coral_2011}. Over time, the accumulation of skeletal material enables the colony to grow in size and complexity, forming massive reef structures.

Coral growth forms vary across species, including branching, massive, encrusting, and foliaceous morphologies. This study focuses on branching corals exhibiting tree-like architectures with rapid expansion and high surface-area-to-volume ratios. Representative species include \textit{Acropora palmata}, \textit{Acropora cervicornis}, and \textit{Pocillopora damicornis}.

Beyond their structural role, coral reefs are critical to nutrient cycling, wave attenuation, and shoreline protection. They underpin local economies via fisheries, tourism, and marine product harvesting~\citep{moberg_ecological_1999}. However, these ecosystems are susceptible to environmental stressors, particularly ocean warming, acidification, and pollution. Such factors have led to widespread coral bleaching and mortality, threatening the ecological and economic functions of reefs~\citep{hoegh-guldberg_coral_2017}.

Understanding the mechanisms that govern coral growth is essential for predicting their response to changing environmental conditions. Mathematical modeling of these processes—especially calcification and structural development—offers a powerful tool for exploring coral resilience and guiding conservation strategies. In this work, we integrate biochemical knowledge of coral calcification with a hierarchical spatial framework to capture coral morphogenesis's chemical and geometric features.

\section{The Proposed Model}\label{sec:model}

The previous section reviewed the mathematical and biological background underlying our approach. 
We now develop the proposed biochemical $p$-adic reaction–diffusion model, which constitutes the core contribution of this work. 
This model integrates key aspects of coral calcification chemistry with the ultrametric diffusion framework introduced above. 
It provides a minimal yet mathematically consistent representation of ion transport and precipitation processes occurring within the coral's internal branching network.

\subsection{Biochemical \texorpdfstring{$p$}{p}-adic Reaction–Diffusion Model for Coral Calcification}\label{Biochemical_model}
We introduce a $p$-adic reaction–diffusion model that captures the dynamics of coral calcification through coupled biochemical reactions and hierarchical diffusion.

Calcium carbonate (\ce{CaCO3}) can be formed via the reaction of calcium ions with either carbonate (\ce{CO3^{2-}}) or bicarbonate (\ce{HCO3^{-}}) ions. Since bicarbonate is the dominant form of dissolved inorganic carbon in seawater~\citep{cole_dissolved_2014}, we base our model on the latter pathway. The following chemical reactions are considered:
\begin{align}\label{Chemical1}
&\ce{CO2 + H2O + CO3^{2-}} \xrightarrow{k_1} 2\ce{HCO3^{-}}, \\
\label{Chemical2}
&\ce{Ca^{2+} + 2HCO3^{-}} \xrightarrow{k_2} \ce{CaCO3 + H2O + CO2},
\end{align}
As described in~\citep{gattuso_photosynthesis_1999}, Reaction~\eqref{Chemical1} reflects the generation of bicarbonate from carbonate and carbon dioxide in seawater, while Reaction~\eqref{Chemical2} accounts for the precipitation of calcium carbonate through the interaction of calcium with bicarbonate. These two reactions represent the key chemical processes underlying coral calcification, linking dissolved inorganic carbon to the formation of solid CaCO$_3$. Under typical seawater conditions, bicarbonate (\ce{HCO3^-}) is the dominant inorganic carbon species, in agreement with marine carbonate chemistry measurements~\citep{MILLERO2008}.

We restrict our model to these reactions, assuming optimal and constant environmental conditions (e.g., temperature, pH). This choice reflects the fact that they capture the core pathway of calcium carbonate formation while keeping the formulation analytically and numerically tractable. By focusing on this essential mechanism, the model provides a transparent setting in which the influence of $p$-adic ultrametric diffusion on morphogenesis can be rigorously assessed. At the same time, because \ce{CO2} modulates bicarbonate availability, the formulation indirectly incorporates the effect of environmental factors—particularly temperature—through the initial concentration of \ce{CO2}~\citep{stips_causal_2016}. More elaborate processes—such as water flow, extended carbonate equilibria, or coral respiration—can be incorporated modularly in future extensions, but our goal here is to establish a minimal and mathematically consistent foundation that highlights the methodological contribution of the $p$-adic framework. In this sense, the model remains closed in terms of dynamic variables but semi-open to the environment via parameter sensitivity.

We denote the concentrations of the involved species as functions of $(t,x)$:
\begin{align*}
z &= [\ce{CO2}], & u &= [\ce{CO3^{2-}}], & s &= [\ce{HCO3^{-}}], & v &= [\ce{Ca^{2+}}], & w &= [\ce{CaCO3}].
\end{align*}

Here, $x \in \mathbb{Z}_p$ represents branch position in the coral structure, and $t \in \mathbb{R}^+$ denotes time. Each function is $C^1$ in time and locally constant in the $p$-adic spatial variable; that is,
\[
\left\{
\varphi : \mathbb{R}^+ \times \mathbb{Z}_p \to \mathbb{R}^+ \,\middle|\,
\begin{array}{l}
\varphi(\cdot, x) \in C^1(\mathbb{R}^+) \text{ for each } x \in \mathbb{Z}_p, \\
\varphi(t, \cdot) \in \tes_{\mathbb{R}}(\mathbb{Z}_p) \text{ for each } t \in \mathbb{R}^+
\end{array}
\right\},
\]
where $\tes_{\mathbb{R}}(\mathbb{Z}_p)$ denotes the space of real-valued, locally constant functions supported in $\mathbb{Z}_p$ (see ~\autoref{appe}). All species are confined to the coenosarc, which we model as a closed reaction--diffusion
domain with no external ion supply or loss. Water acts as the continuous solvent within the
coenosarc, and we assume its concentration $[\ce{H2O}]$ is spatially uniform and constant.
Consequently, we absorb it into the first reaction's rate constant by defining
\[k_1' = k_1 [\ce{H2O}],\]

Following the law of mass action~\citep[Chapter 6]{murray_mathematical_2001}, the reaction kinetics are governed by:
\begin{align}
\frac{\partial z}{\partial t} &= -k_1'  u z + k_2 v s^2, \label{Eq1} \\
\frac{\partial u}{\partial t} &= -k_1'  u z, \label{Eq3} \\
\frac{\partial s}{\partial t} &= k_1'  u z - k_2 v s^2, \label{Eq4} \\
\frac{\partial v}{\partial t} &= -k_2 v s^2, \label{Eq5} \\
\frac{\partial w}{\partial t} &= k_2 v s^2. \label{Eq6}
\end{align}

where $k_1',k_2 >0$ are the reaction rate constants. We assume uniform initial concentrations: $z(0,x)=z_0$, $u(0,x)=u_0$, $v(0,x)=v_0$, and $s(0,x)=w(0,x)=0$. The latter two conditions reflect that no reaction products (\ce{HCO3^{-}} and \ce{CaCO3}) are present at time zero, ensuring consistency with the assumed initial absence of chemical products.

Under these assumptions, several conservation identities allow us to eliminate variables algebraically:
\begin{align*}
z(t,x) &= z_0 - s(t,x), \\
s(t,x) &= v(t,x)-u(t,x)+u_0-v_0,
\end{align*}

Substituting these identities into the system yields a reduced formulation involving only $u$, $v$, and $w$:
\begin{align}
\frac{\partial u}{\partial t} &= -k_1' u (u - v + z_0 - c), \\
\frac{\partial v}{\partial t} &= -k_2 v (v - u + c)^2, \\
\frac{\partial w}{\partial t} &= k_2 v (v - u + c)^2,
\end{align}

The constant $c:=u_0-v_0$ is fixed for all $(t,x)$ and represents the initial imbalance between carbonate and calcium concentrations. Since in normal conditions $\ce{Ca^2+}$ concentration is greater than $\ce{CO3^2-}$ concentration (see \citep{MILLERO2008}), $c$ is taken negative.

This system is strongly nonlinear, involving cubic and quadratic terms in the variables. Analytical solutions are generally not tractable, and numerical methods become necessary for exploring the model's dynamic behavior.

We use the Vladimirov operator $\boldsymbol{D}^\alpha$, a nonlocal pseudodifferential operator defined over $p$-adic domains, to incorporate spatial diffusion. Since we work with locally constant functions supported on $\mathbb{Z}_p$, we consider a modified operator, defined as the standard Vladimirov integral minus a constant term, which arises from the restriction to this function space (see \autoref{appe}). We denote this modified operator by $\overline{\boldsymbol{D}}^\alpha$ for simplicity.

Assuming that only \ce{CO3^{2-}} and \ce{Ca^{2+}} diffuse, we obtain the full reaction-diffusion system:
\begin{align}
\frac{\partial u}{\partial t} &= -d_1 \overline{\boldsymbol{D}}^\alpha u - k_1' u (u - v + z_0 - c), \label{diff_u} \\
\frac{\partial v}{\partial t} &= -d_2 \overline{\boldsymbol{D}}^\alpha v - k_2 v (v - u + c)^2, \label{diff_v} \\
\frac{\partial w}{\partial t} &= k_2 v (v - u + c)^2. \label{diff_w}
\end{align}

Here, $d_1>0$ and $d_2>0$ are diffusion coefficients for carbonate and calcium, respectively. The system~\eqref{diff_u}–\eqref{diff_w} couples chemical kinetics with spatial diffusion on a hierarchical ultrametric structure.

\subsubsection{Nondimensionalization}

We apply a standard nondimensionalization procedure~\citep{segel_simplification_1972} to simplify the system and reduce the number of parameters. For simplicity, we scaled $u,v,$ and $w$ with the same characteristic concentration and let:
\begin{equation}\label{scaling}
    u = \frac{d_1}{k_1'} \, \overline{u}, \quad
v = \frac{d_1}{k_1'} \, \overline{v}, \quad
w = \frac{d_1}{k_1'} \, \overline{w}, \quad
t = \frac{\overline{t}}{d_1}.
\end{equation}

Substituting these expressions into equations~\eqref{diff_u}--\eqref{diff_w} and dropping the overlines for simplicity, we obtain the dimensionless system:
\begin{equation}\label{R-D_system}
\begin{cases}
\frac{\partial u}{\partial t} &= -\overline{\boldsymbol{D}}^\alpha u - u(u - v + \sigma - \beta), \\
\frac{\partial v}{\partial t} &= -d \overline{\boldsymbol{D}}^\alpha v - \eta v(v - u + \beta)^2, \\
\frac{\partial w}{\partial t} &= \eta v(v - u + \beta)^2,
\end{cases}    
\end{equation}

with parameters:
\[
d = \frac{d_2}{d_1}, \quad
\eta = \frac{d_1k_2}{(k_1')^2}\quad
\beta = c \frac{k_1'}{d_1}, \quad
\sigma = z_0 \frac{k_1'}{d_1}.
\]

The nondimensional system~\eqref{R-D_system} captures the essential features of coral calcification using only four parameters, facilitating parametric exploration under different environmental and biochemical scenarios.
Notice that \( d > 0 \), 
since it is the ratio of diffusion coefficients. Similarly, \( \eta \) and \( \sigma \) are also positive as they are defined as the product of positive values and because $c<0$, we have $\beta<0$. 

\begin{remark}\label{remark_nondimensionalization}
    The state variables \(u(t,x)\), \(v(t,x)\) and \(w(t,x)\) are dimensionless, scaled concentrations. Throughout the manuscript, for brevity, we use the term \emph{concentration} to refer to these scaled quantities. Physical concentrations are recovered via \eqref{scaling}
\end{remark}

\subsubsection{Local Stability of the Reaction System}

As $w$ does not affect the dynamics of the other variables, the stability analysis focuses on the first two equations. These equations admit four equilibrium points, obtained as solutions of:
\begin{equation}\label{equations_equilib_points}
\begin{cases}
    f(u,v)=-u(u - v + \sigma - \beta) = 0, \\
    g(u,v)=-\eta v (v - u + \beta)^2 = 0.
\end{cases}
\end{equation}
The equilibrium points are:
\begin{enumerate}
    \item $(0, 0)$,
    \item $(0, -\beta)$.

\end{enumerate}
Only equilibrium points with non-negative concentrations are considered, as negative concentrations are not physically meaningful in chemical systems.
The Jacobian matrix is:
\begin{equation}
 \scalemath{0.8}{
    \begin{bmatrix}
        -(u - v + \sigma - \beta)-u &  u\\
        2\eta v (v - u + \beta) & -\eta  (v - u + \beta)^2 - 2\eta v (v - u + \beta)
    \end{bmatrix}  }
\end{equation}

\begin{enumerate}
    \item The eigenvalues of the Jacobian at the origin \( (0, 0) \) are:
    \[
    \lambda_1 = -(\sigma - \beta), \qquad \lambda_2 = -\eta \beta^2.
    \]
    
     Since \( (\sigma - \beta) > 0 \) and \(\eta \beta^2 > 0\), then both eigenvalues are negative, and the origin is locally asymptotically stable.
       
    \item The equilibrium point \( (0, -\beta) \) has eigenvalues
    \[
    \lambda_1 = - \sigma, \qquad \lambda_2 = 0,
    \]
    with \(  \sigma > 0 \), so the point is non-hyperbolic. Center manifold theory shows that \( (0, -\beta) \) is locally asymptotically stable.

\end{enumerate}

\section{Numerical Solutions}\label{Numerical_sol}

In this section, we present numerical solutions for our system and propose a condition for ramification to simulate coral growth. Varying the parameters, we show how some factors can affect the branching and calcification of the coral. To perform the simulation, we discretize the space of our functions.

\subsection{Discretization of the system}

For a fixed $t$, the functions in our model belong to the space of locally constant functions supported in $\mathbb{Z}_p$, denoted by $\tes_{\mathbb{R}}(\mathbb{Z}_p)$. To visualize the behavior of the solutions, we discretize the system as follows:

Since the number of branches in the coral is finite, we can fix $m \in \mathbb{N}$ and a prime $p$ such that the total number of branches in the coral is $p^m$.

We take the set $G_m$ of $p$-adic integers defined in~\autoref{Intro_padics} in the form:
\begin{align*}
i = i_0 + i_1p + \dots + i_{m-1}p^{m-1},
\end{align*}
where $i_k \in \{0,1,\dots,p-1\}$ for each $k \in \{0,1,\dots,m-1\}$.

We assign to each branch in the coral a ball centered at $i\in G_m$ with radius $p^{-m}$. For $t\ge 0$, consider the concentration functions $u,v$ and $w$ belonging to the space of locally constant functions supported on balls of radius $p^{-m}$ with center in $G_m$; this space is denoted by $X_m$. The set $\{\boldsymbol{1}_{B_{-m}(i)}(x)\}_{i\in G_m}$ forms an orthonormal basis of the space $X_m$ of functions that are locally constant on balls of radius $p^{-m}$ and vanish outside the union of these balls.

Each function can then be written as:
\begin{align*}
   u(t,x) &= \sum\limits_{i\in G_m}u(t,i)\boldsymbol{1}_{B_{-m}(i)}(x), \\
   v(t,x) &= \sum\limits_{i\in G_m}v(t,i)\boldsymbol{1}_{B_{-m}(i)}(x),  \\
   w(t,x) &= \sum\limits_{i\in G_m}w(t,i)\boldsymbol{1}_{B_{-m}(i)}(x)
\end{align*}
where $u(t,i), v(t,i),  w(t,i)$ are continuously differentiable functions for each $i \in G_m$. These functions measure the concentrations of \ce{CO3}, \ce{Ca}, and \ce{CaCO3} on $B_{-m}(i)$, which models one branch of the coral.

The Vladimirov operator restricted to functions in the space $X_m$ is represented by a matrix $L^{\alpha}=[L_{ij}^{\alpha}]$ with size $\#G_m\times \#G_m$. For \(\alpha=2\):

\begin{equation}\label{alpha_2}
    L_{ij}^{2}=
    \begin{cases}
        \frac{p^{3-m}(1-p^2)}{p^3-1}\frac{1}{\no{i-j}_p^{3}} &\text{ if } i\neq j\\
       -\frac{p^{3-m}(1-p^2)}{p^3-1}\displaystyle\sum_{\substack{k\in G_m\\k\ne i}}\frac{1}{|i-k|_p^{3}} &\text{ if } i= j.
    \end{cases}
\end{equation}

The explicit form of the matrix $L^{\alpha}$ is derived in~\autoref{appe}, where its construction and interpretation are discussed in detail.

We denote  
\begin{align}
    f(u,v) &= -{u}({u} - {v} + \sigma - \beta), \\
    g(u,v) &= -\eta {v}({v} - {u} + \beta)^2, \\
    h(u,v) &= \eta{v}({v} - {u} + \beta)^2.
\end{align}
This spatial discretization transforms the original reaction-diffusion PDE system into a coupled system of $3p^m$ ordinary differential equations of the form:

\begin{align}\label{Sdiscret}
\begin{split}
    \frac{\partial u(t,i)}{\partial t} &= -\frac{1-p^{\alpha}}{1-p^{-\alpha-1}}\left(\sum_{\substack{j\in G_m\\j\ne i}}\frac{p^{-m}u(t,j)}{|i-j|_p^{\alpha+1}} - u(t,i)\sum_{\substack{j\in G_m\\j\ne i}}\frac{p^{-m}}{|i-j|_p^{\alpha+1}}\right) \\
&\quad +f(u(t,i),v(t,i)), \\
\frac{\partial v(t,i)}{\partial t} &= -\frac{1-p^{\alpha}}{1-p^{-\alpha-1}}\left(\sum_{\substack{j\in G_m\\j\ne i}}\frac{p^{-m}v(t,j)}{|i-j|_p^{\alpha+1}} - v(t,i)\sum_{\substack{j\in G_m\\j\ne i}}\frac{p^{-m}}{|i-j|_p^{\alpha+1}}\right) \\
&\quad +g(u(t,i),v(t,i)), \\
\frac{\partial w(t,i)}{\partial t} &= h(u(t,i),v(t,i)).
\end{split}
\end{align}

See~\autoref{appe}. Using this discretization, we employ the ODE45 solver implemented in MATLAB to solve the system.
We begin by analyzing the purely temporal dynamics, assuming uniform concentrations across space ($x$-independent). This allows us to isolate and interpret the role of each parameter in the chemical reactions. Note that, in this case, there is no diffusion. In the following figures axes show dimensionless concentrations (see \autoref{remark_nondimensionalization}).

\begin{figure}[H]
    \centering
\includegraphics[width=0.45\linewidth]{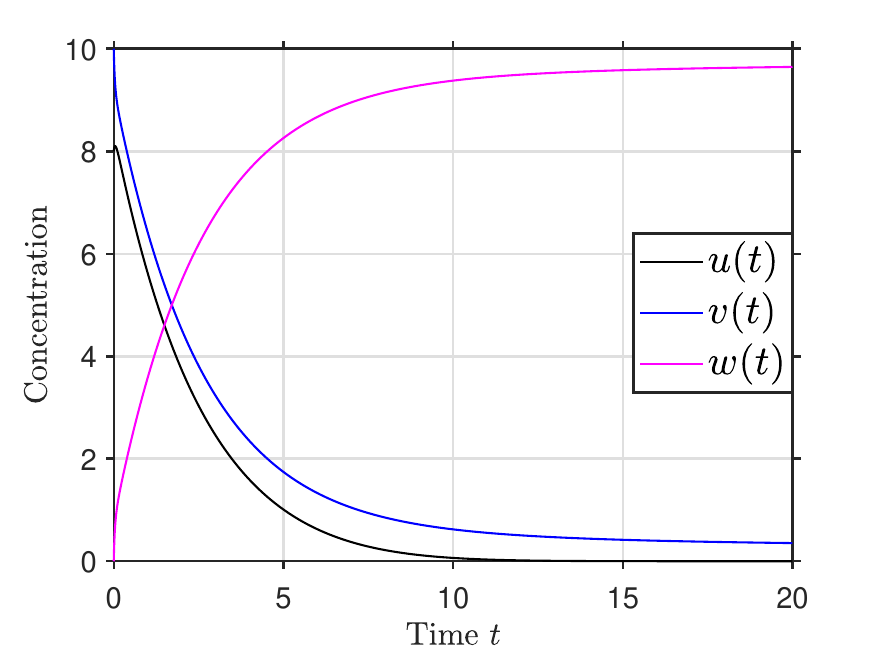}
\caption{Solution of the system ~\eqref{Sdiscret} without diffusion for $\beta = -0.2$, $\sigma = 1$, and $\eta = 1$.}
\label{sin_dif}
\end{figure}

In~\autoref{sin_dif}, we show the solution of the system without the diffusive part by fixing $ \beta =-0.2$, $\sigma = 1$, and $\eta = 1$ with the initial condition $(u(0),v(0),w(0))=(8,10,0)$. The graph shows the concentrations resulting from the chemical reaction. As time increases, the concentrations of \ce{CO3} and \ce{Ca} decrease while \ce{CaCO3} is produced. When \ce{Ca} is depleted, the amount of \ce{CaCO3} becomes constant. Since no additional \ce{Ca} or \ce{CO3} is supplied, \ce{CaCO3} increases until it stabilizes.

When the parameter $\sigma$, representing the initial amount of \ce{CO2}, is increased, 
the availability of \ce{CO2} in reaction~\eqref{Chemical1} rises, transiently enhancing the early 
production of \ce{HCO3}. This larger bicarbonate pool promotes reaction~\eqref{Chemical2}, in 
which \ce{Ca} combines with \ce{HCO3} to form \ce{CaCO3}. As a consequence, the precipitation of \ce{CaCO3} is accelerated.
In~\autoref{variation_sigm}, two cases are shown, with the remaining parameters and initial conditions being the same as in~\autoref{sin_dif}.
\begin{figure}[H]
\centering
\subfloat[$\sigma = 0.5$ \label{diox_menor}]{
  \includegraphics[width=0.45\linewidth]{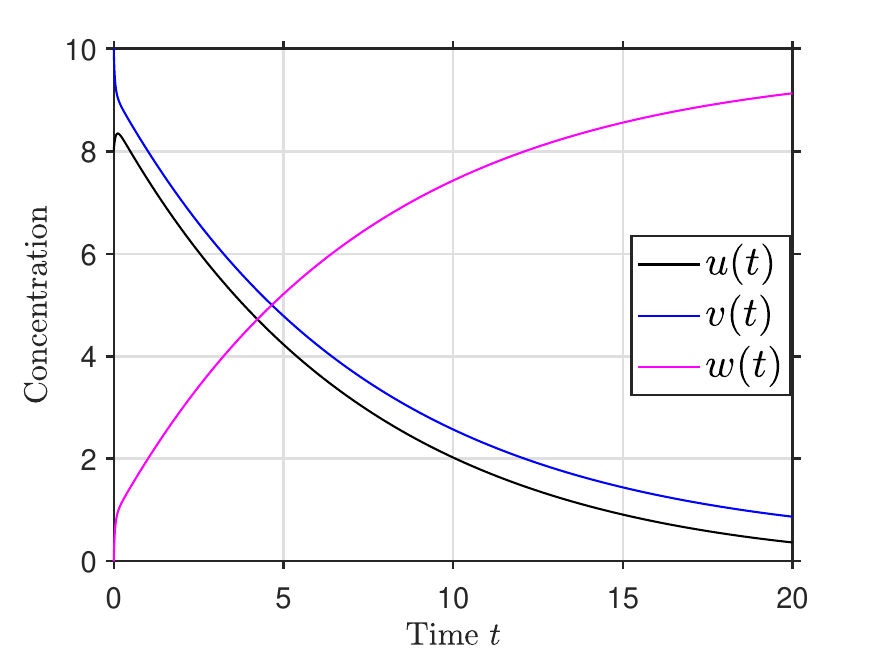}
}
\hfill
\subfloat[$\sigma = 2$ \label{diox_mayor}]{
  \includegraphics[width=0.45\linewidth]{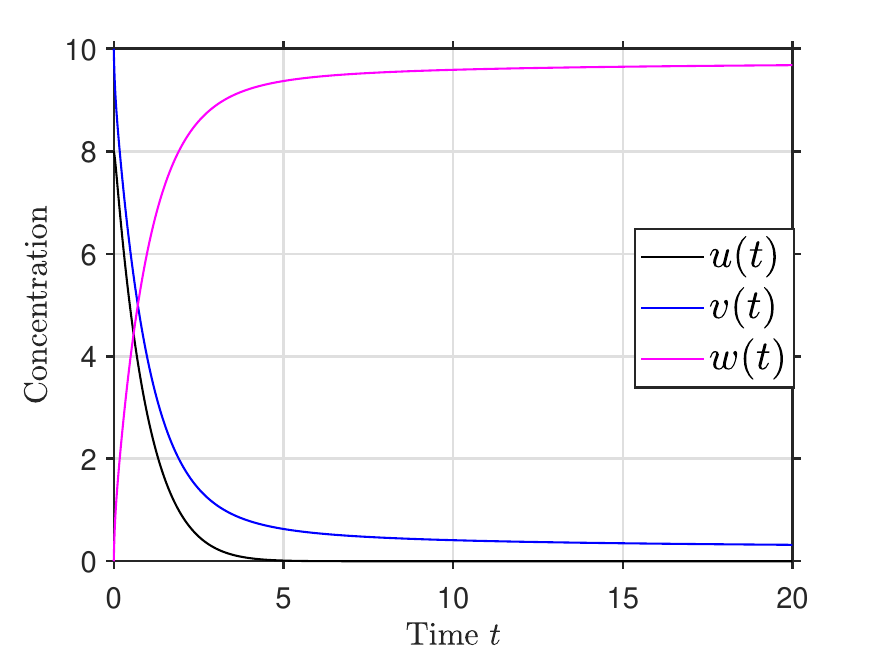}
}
\caption{The plots show the temporal evolution of the system for different values of $\sigma$, 
representing the initial \ce{CO2} concentration. A higher $\sigma$ increases the initial 
availability of \ce{CO2}, enhancing early \ce{HCO3} production and thus accelerating 
the precipitation of \ce{CaCO3}, while the total yield remains fixed by the initial calcium 
and carbonate stocks.}
\label{variation_sigm}
\end{figure}

For the hierarchical diffusion, we set $p=2$ for computational tractability. We take $\alpha=2$ as a common Laplacian-like benchmark: larger
$\alpha$ accelerates homogenization across the hierarchy, whereas smaller $\alpha$
preserves sharper inter-branch contrasts. With $p=2$ and $\alpha=2$ we use the
matrix~\eqref{alpha_2}. Biologically, $p=2$ corresponds to binary branching; more
generally, the branching factor equals the prime $p$.

\begin{remark}\label{rem:choice_of_p_m}
The prime number $p$ is a mathematical requirement for defining the $p$-adic field $\mathbb{Q}_p$ and its ultrametric topology. 
We use $p=2$ for simplicity. 
Larger primes can also be used—the same mathematical analysis applies—but they increase the number of branches and therefore the computational cost. 
Since real coral colonies have finitely many branches, this does not represent a limitation of the approach but only a practical consideration.
\end{remark}

As defined above, $d=d_2/d_1$. Measurements show that carbonate species have larger
diffusion coefficients than \ce{Ca^{2+}} \citep{POISSON1983,ZEEBE2011}, so $d<1$.
In the calcifying space, effective diffusivities are further reduced by tortuosity
and by intercellular junctions, which restrict paracellular transport
\citep{Tambutt2011}; these effects likely hinder \ce{Ca^{2+}} more than carbonate. Consequently, smaller $d$ reduces calcium mixing relative to
carbonate and accentuates branch-level heterogeneity; to foreground this regime we set
$d=0.1$ for the numerical solutions.

{\begin{remark}\label{parameters_simulations}
$\sigma<1$ or $\eta<1$ slows it. 
Unless stated otherwise, we fix $p=2$, \(\alpha=2\), $d=0.1$ $\sigma=\eta=1$ and $\beta=-0.2$.
\end{remark}

Supposing that the coral has two branches, we represent them by the first hierarchical level ($m=1$) of the $2$-adic tree. 
Each branch corresponds to a ball in $\mathbb{Z}_2$ of radius $\tfrac{1}{2}$—specifically, $B_{-1}(0)$ and $B_{-1}(1)$, centered at $0$ and $1$, respectively. 
We assume that the concentrations of \ce{CO3}, \ce{Ca}, and \ce{CaCO3} are spatially uniform within each branch—i.e., depending only on time on each branch—while possibly differing between branches. 
Accordingly, we solve system~\eqref{Sdiscret} for $m=1$, treating each ball $B_{-1}(i)$ as a well-mixed compartment coupled by diffusion through the hierarchical structure. 
\begin{figure}[H]
\centering
\subfloat[Concentration of \ce{CaCO3} given two branches\label{dos_bol}]{
  \includegraphics[width=0.45\linewidth]{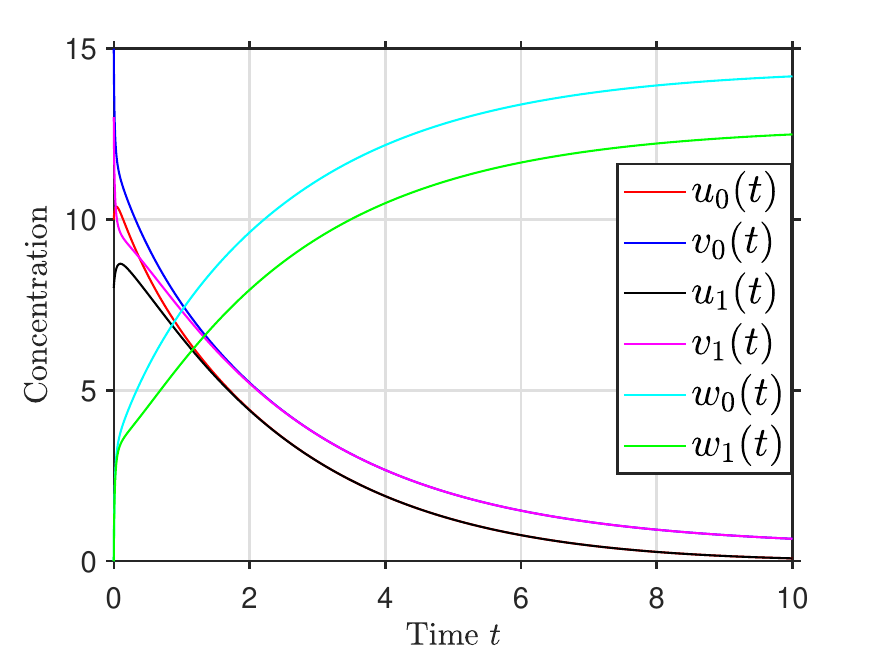}
}
\hfill
\subfloat[Concentration of \ce{CaCO3} given four branches\label{cuatro_bol}]{
  \includegraphics[width=0.45\linewidth]{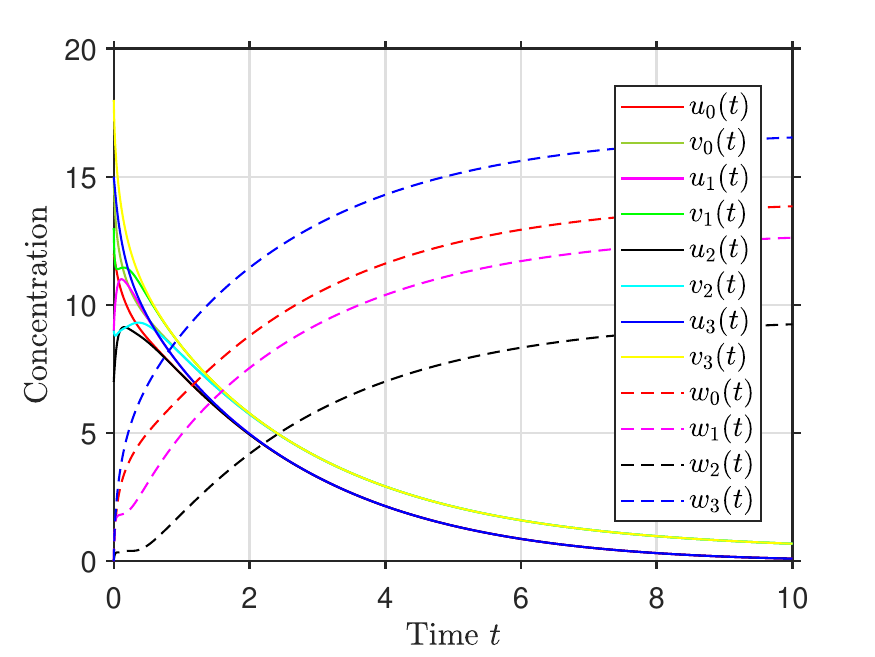}
}
\caption{The figures illustrate the concentration of \ce{CaCO3} assuming that the coral possesses two and four branches, respectively. Figure~\ref{dos_bol} shows the \ce{CaCO3} concentration when the coral has two branches, while~\autoref{cuatro_bol} presents the corresponding solution for a coral with four branches.}

\label{branches}
\end{figure}

To reveal inter-branch differences, the initial concentrations of calcium and carbonate are taken slightly different in each branch; otherwise, the symmetry of the system would produce identical trajectories.
The solution for this case is shown in~\autoref{dos_bol}, which illustrates the temporal evolution of the concentrations in the two balls, using the initial conditions 
\((u_0(0), u_1(0), v_0(0), v_1(0), w_0(0), w_1(0)) = (10, 8, 15, 13, 0, 0)\).
The values $u_i(t)$, $v_i(t)$, and $w_i(t)$ represent the concentrations of \ce{CO3}, \ce{Ca}, and \ce{CaCO3}, respectively, in the region corresponding to the support of the function $\boldsymbol{1}{B_{-1}(i)}$.

When the two branches further divide, resulting in four branches, the problem reduces to solving the system~\eqref{Sdiscret} taken $m = 2$. In this case, the functions $u$, $v$, and $w$ only depend on time within balls centered at $0, 1, 2,$ and $3$ (in base two) with radius $\frac{1}{4}$, and zero outside these balls.

The solution for this case is shown in~\autoref{cuatro_bol}, which illustrates the behavior of concentrations over time in each of the four regions. Each subscript $ i \in {0,1,2,3}$ corresponds to center of the respective ball of radius $1/4$, used to represent the branches of the coral. 

In all cases, we set the initial
\ce{CaCO3} to zero, i.e., $w(0)=0$ on every branch.

\subsection{Coral growth simulation}
Having established the dynamics of ion concentrations for a fixed number of branches, 
we now propose a coral growth simulation that incorporates diffusion processes. 
To achieve this, we first introduce a branching condition that determines when a new branch emerges, 
followed by a death condition that identifies when a branch no longer receives sufficient nutrients to sustain its growth. 
These two conditions are then combined to implement the simulation of a ramified coral structure. 
In what follows, we first describe the branching condition, then the death condition, 
and finally the numerical procedure used to simulate coral growth under these rules.

\subsubsection{Ramification condition}

Branching in the model arises from the internal chemical kinetics of the coral rather than from 
environmental drivers such as light, flow, or external feeding. 
The system is closed, representing only ion diffusion and reaction within the coenosarc network. 
As reactants are progressively consumed, the rate of calcium carbonate formation decreases, 
reflecting the depletion of dissolved calcium available for precipitation. 
Branching is defined by the condition $w(t) = v(t)$, which corresponds to the point where half of the initial 
calcium pool has been converted into solid \ce{CaCO3}.

\begin{figure}[H]
    \centering
    \includegraphics[width=0.45\linewidth]{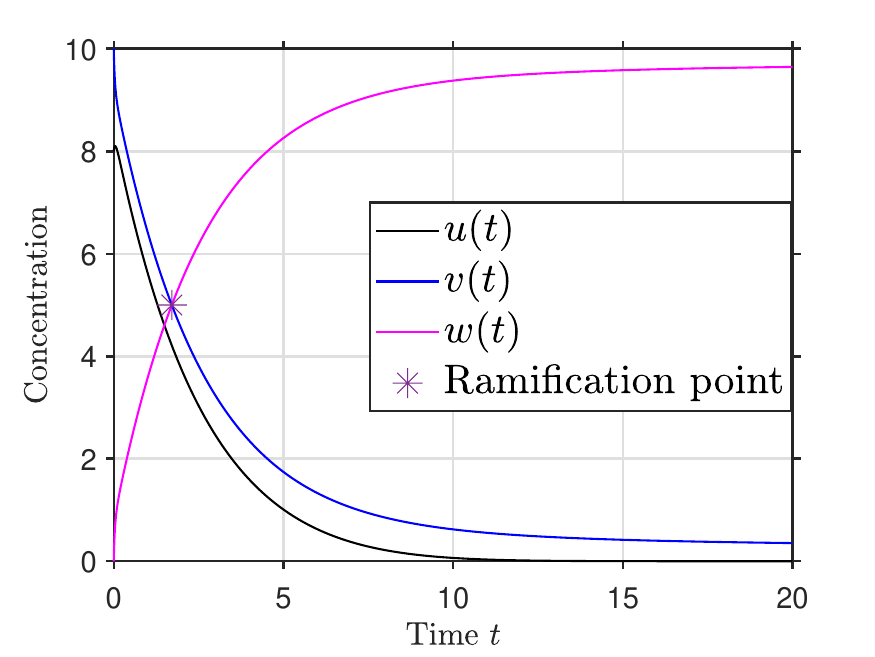}
    \caption{The star marks the time when the deposition rate first satisfies $w(t)=v(t)$.}
    \label{bifurcation}
\end{figure}

At this stage, calcium becomes the limiting reagent, and the reaction kinetics transition 
from a fast precipitation regime to a slower, diffusion-limited one. 
This condition thus provides a model-intrinsic criterion for internal resource limitation, 
representing the onset of spatial heterogeneity in calcification that drives the emergence 
of new branches within the coral structure. 
Numerically, this intersection is detected by locating the first time step at which $v(t)-w(t)$ changes sign, corresponding to the moment when $v(t)$ becomes smaller than $w(t)$.
In MATLAB, this is implemented by finding the first index $j$ such that $v_{\text{sol}}(j)-w_{\text{sol}}(j)<0$.

We note that this branching rule is phenomenological: it identifies a kinetic transition within a closed system rather than a direct biological threshold. 
Its sensitivity to initial ion concentrations or reaction rates could be further explored in future work to assess how local chemical conditions influence branch timing and morphology.

According to \autoref{variation_sigm}, increasing the value of the parameter \(\sigma\) causes the curves of \(v\) and \(w\) to intersect at an earlier time. 
This indicates that the branching event occurs sooner; hence, a higher initial amount of \ce{CO2} (reflected by a larger \(\sigma\)) leads to an earlier onset of branching.

\subsubsection{Halting condition}
In addition to the branching criterion, it is necessary to define a condition that determines when branch growth stops. 
This \textit{halting condition} reflects the physiological limitation of a branch that can no longer sustain calcification due to insufficient ion availability. 

The concept of calcium carbonate saturation has been extensively studied in coral calcification. In~\citep{gattuso_photosynthesis_1999}, the role of carbonate chemistry in regulating calcification and bicarbonate availability is analyzed. The effect of aragonite saturation on coral reef communities is explored in~\citep{silverman_effect_2007}, where elevated \ce{CO2} levels are shown to potentially induce coral dissolution. 

The saturation state is given by:
\begin{equation}
\Omega = \frac{[\ce{Ca^{2+}}][\ce{CO3^{2-}}]}{K_{sp}},
\end{equation}
where $K_{sp}$ is the solubility product constant of \ce{CaCO3}. Under optimal seawater conditions, \linebreak$K_{sp} \approx 6.65(\pm 0.12) \times 10^{-7} \, (\text{mol}/\text{Kg})^2$ see \citep{Mucci1980}). 

Following~\citep{yamamoto_threshold_2012}, we adopt $\Omega = 1$ as an operational threshold indicating 
that the system has reached chemical equilibrium within the closed calcifying medium. 
At this point, calcium availability is nearly exhausted and the precipitation rate becomes negligible, 
marking the halting point in our simulations. 
This interpretation differs from dissolution-based thresholds in open-seawater studies 
\citep{steiner_water_2018}, as our formulation represents a closed system where $\dot{w} \ge 0$ by construction.
The halting point corresponding to the curve in \autoref{sin_dif} is illustrated in \autoref{dead}.

\begin{figure}[H]
     \centering
     \includegraphics[width=0.5\linewidth]{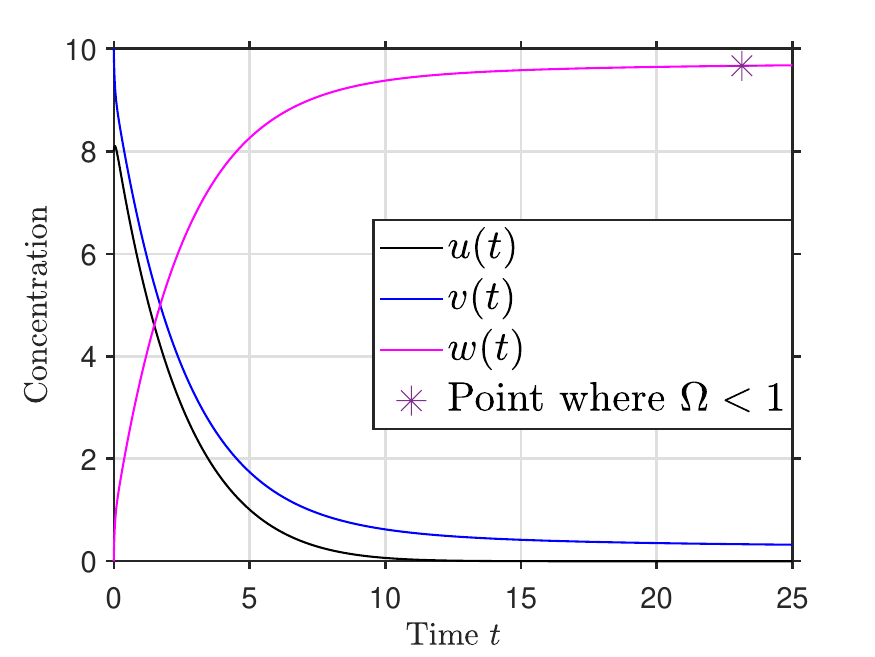}
     \caption{
The star marks the time at which the saturation index reaches $\Omega = 1$, 
indicating that calcium availability is nearly exhausted and precipitation effectively ceases. 
This threshold denotes kinetic exhaustion within the closed system rather than actual dissolution.
}    
s\label{dead}
\end{figure}

\subsubsection{Simulation of coral growth}

To simulate coral growth, we implemented the following event–driven procedure for $p = 2$:

\begin{enumerate}
    \item \textbf{Level \(m = 0\) (single branch).}
    The system is first solved under spatially uniform conditions on \(\mathbb{Z}_2\) (well–mixed), i.e., concentrations depend only on time. 
    The branching time \(t_b\) is defined as the first instant such that \(w(t_b) = v(t_b)\), at which the saturation index \(\Omega(t_b)\) is computed.

    \item \textbf{Branching test (level \(m = 0 \rightarrow 1\)).}
    If \(\Omega(t_b) \ge 1\), a bifurcation occurs, creating two daughter branches (\(m = 1\)) represented by balls of radius \(1/2\) centered at \(0\) and \(1\).

    \item \textbf{Initialization at birth (mass–conserving split).}
    At the branching time \(t_b\), daughter branches share the same chemical environment. 
    Their initial states are therefore drawn from the parent concentrations via a small random, mass–conserving split:
    \[
      x_0(0) = \theta\,x_{\mathrm{parent}}(t_b), \qquad 
      x_1(0) = (1-\theta)\,x_{\mathrm{parent}}(t_b), \quad \theta \in (0,1),
    \]
    applied to each \(x \in \{u, v\}\), while newborn branches start with \(w(0) = 0\).

    \item \textbf{Level \(m = 1\) (two branches).}
    The system is then solved independently on the two balls. 
    For each branch \(i \in \{0,1\}\), we detect the first time \(t_b^{(i)}\) for which \(w_i(t) = v_i(t)\) and evaluate \(\Omega_i(t_b^{(i)})\).
    A branch can bifurcate only if \(\Omega_i(t_b^{(i)}) \ge 1\).

    \item \textbf{Synchronization to the next level.}
    To maintain a hierarchy compatible with the Vladimirov operator on \(\mathbb{Z}_2\), 
    the simulation proceeds until all branches at level \(m\) that satisfy the bifurcation criterion have done so. 
    Branches meeting the condition produce two daughters; those that do not are retained or halted depending on their saturation level.

    \item \textbf{Initialization at the new level.}
    Each bifurcating branch \(i\) generates two daughters initialized by a mass–conserving random split of its state at \(t_b^{(i)}\) (as described in Step 3). 
    Set \(w=0\) for all newborn branches.

    \item \textbf{Iteration over levels.}
    Increase \(m \leftarrow m + 1\) and repeat Steps 4–6, 
    yielding \(2^m\) branches once all parent branches at level \(m-1\) have bifurcated. 
    The process stops when all branches fail the bifurcation criterion or meet a halting condition (\(\Omega_i < 1\)).
\end{enumerate}

\begin{remark}
\begin{itemize}
  \item[(i)] At bifurcation, daughter branches inherit nearly identical chemical states from the parent; small stochastic perturbations introduce asymmetry while conserving total mass. 
  Diffusion and reaction then drive differentiation among branches, while ion exchange between neighboring branches remains possible via the diffusion term.

  \item[(ii)] The synchronization step ensures that the number of branches remains a power of two (\(2^m\)), matching the discrete hierarchy required for the Vladimirov operator on the truncated \(p\)-adic tree. 
  This procedure is a numerical exploration of how \(p\)-adic representations capture transport between branches. 
  The advantage of the \(p\)-adic setting is its natural ability to quantify nonlocal diffusion through a hierarchical structure—something that a Euclidean domain cannot reproduce.

  \item[(iii)] In the graphical representation, branch length is proportional to the time required to reach the bifurcation condition. 
  Because branching times differ among branches, the resulting structure is not perfectly symmetric, although partial symmetry may arise depending on how the system is synchronized (using the first or last branch as reference).

  \item[(iv)] Each branch divides into two in the present case, but the method can be generalized to other primes \(p > 2\), allowing ternary or higher-order branching patterns (see~\autoref{rem:choice_of_p_m}). 
  Only the prime case is considered here; however, since any integer decomposes into prime powers, a general system could, in principle, be represented as a combination of \(p\)-adic components for different primes. 
  The coral-like figures were generated using the \texttt{L-py} framework.
\end{itemize}
\end{remark}

Figure~\ref{coral} shows the coral structure obtained from the simulation. 
The image was rendered using the L-Py library, which enables rule-based generation of branching structures. 
The length of each segment connecting two branching vertices represents the time elapsed before bifurcation—that is, the time between the branch’s origin and the point at which it divides.

\begin{figure}[H]
    \centering
\includegraphics[width=0.25\linewidth]{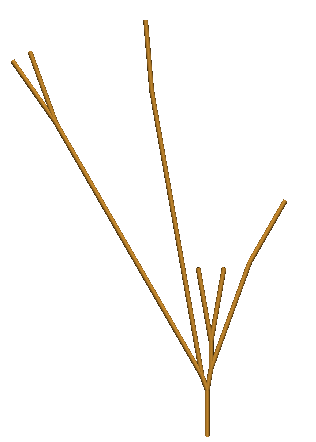}
    \caption{Simulated coral generated with $\sigma=1$, $\beta=-0.2$, $\eta=1$, $\alpha=2$ and $d=0.1$. Branch lengths correspond to total lifetime; the shortest-lived branch lasted $6.90514$ units, and the longest one $17.11237$}
    \label{coral}
\end{figure}
To analyze the influence of the parameters \(\sigma\) and \(\alpha\), two separate simulations were performed: 
one varying only \(\sigma\) and another varying only \(\alpha\), while keeping all other parameters 
as in~\autoref{coral}. 
To ensure that neither stochastic effects nor geometric differences interfered with the comparison, 
both the random value used for branch division and the inclination angles employed to draw the branches 
were the same as in~\autoref{coral}.

For smaller values of $\sigma$, the branching time increases, leading to longer branches before bifurcation. 
Although the simulated coral structures for $\sigma=0.5$(\autoref{coral_dioxid}) and $\sigma=1$(\autoref{coral}) appear similar in overall shape, 
the model predicts that lower $\sigma$ values produce more extended growth before the next branching event. 
In the rendered images, shorter branches are displayed with greater thickness by the L-Py library to enhance visibility, 
while longer ones appear thinner. 
Because all panels occupy the same fixed frame size and are not drawn to scale, 
the differences in branch length may appear visually reduced.

\begin{figure}[H]
    \centering
    \includegraphics[width=0.25\linewidth]{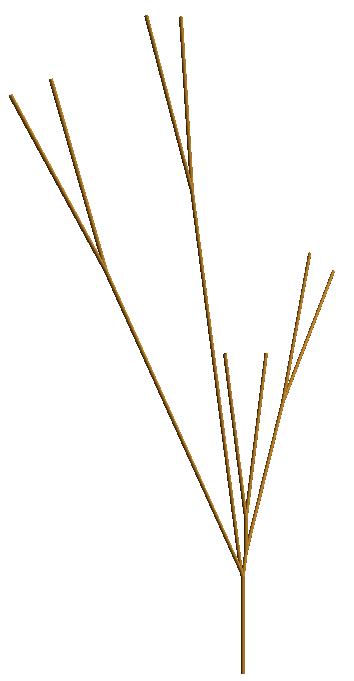}
    \caption{Simulated coral generated with $\sigma=0.5$, $\beta=-0.2$, $\eta=1$, $\alpha=2$ and $d=0.1$. Branch lengths correspond to total lifetime; the shortest-lived branch lasted $8.36579$ units, and the longest one $34.86969$}
    \label{coral_dioxid}
\end{figure}

The parameter $\alpha$ in the Vladimirov operator controls the rate at which diffusion decays with $p$-adic distance. 
For small $\alpha$, long-range interactions between distant branches are stronger, resulting in a more heterogeneous distribution of nutrients (superdiffusive regime). 
As $\alpha$ increases, the operator becomes more local, penalizing long-range transfers and promoting homogenization within each hierarchical level (subdiffusive regime). 
In the case $\alpha = 5$, the distribution is nearly uniform across branches, yielding almost identical branch lengths. 

From a biological standpoint, $\alpha$ can be interpreted as an effective index of ion-transport efficiency within the coral’s gastrovascular network. 
High $\alpha$ values correspond to efficient, well-connected transport among polyps, whereas low $\alpha$ values represent restricted or tortuous exchange that fosters local chemical heterogeneity.

When $\alpha$ is large, diffusion acts locally and efficiently equalizes the nutrient concentrations between neighboring branches. 
This effect is particularly evident at the first bifurcation, when the available nutrient pool is largest. 
At this stage, diffusion rapidly balances the concentrations between the two initial branches. 
Consequently, both branches reach the bifurcation threshold almost simultaneously, with nutrient concentrations that are nearly identical at that moment. 
Upon bifurcation, each parent branch divides its nutrient content randomly between its two daughters, but the sum of their concentrations equals that of the parent at the time of division. 
Since the two parent branches had almost identical concentrations, the total nutrient amounts assigned to each daughter pair are also very similar. 
When diffusion is then applied to the four branches, the Vladimirov operator acts locally within each sibling pair, rapidly balancing their concentrations. 
As a result, each pair of sister branches reaches an internal equilibrium close to equal nutrient distribution, as if the parent’s resources had been divided evenly. 
This mechanism repeats across successive generations, producing the overall symmetry observed in the simulations. 
Thus, the apparent homogeneity does not arise from long-range diffusion between distant branches, but from this recursive local equalization initiated at the first bifurcation under large~$\alpha$(see~\autoref{coral_alpha}).

Graphs~\autoref{coral},\autoref{coral_dioxid} and \autoref{coral_alpha} show coral structures generated from MATLAB data and rendered with L-py. 
Branch thickness is enhanced in shorter segments for visibility; the renderings are illustrative and not to scale.

It should be emphasized that the model does not aim to reproduce the precise three-dimensional geometry of coral colonies, such as growth orientation, branching angles, or overall symmetry. 
Rather, it generates a mathematical skeleton of branching events driven by the biochemical and diffusion dynamics. 

\begin{figure}[H]
    \centering
    \includegraphics[width=0.3\linewidth]{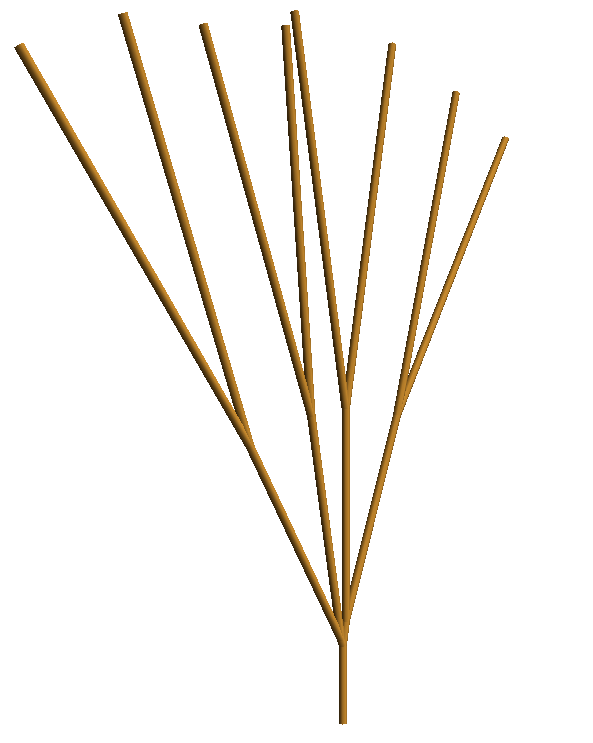}
    \caption{Simulated coral generated with $\sigma=1$, $\beta=-0.2$, $\eta=1$, $\alpha=5$ and $d=0.1$. Branch lengths correspond to total lifetime; the shortest-lived branch lasted $16.33612$ units, and the longest one $16.76954$}
    \label{coral_alpha}
\end{figure}

By bridging non-Archimedean analysis with nonlinear biological dynamics, this framework enriches the mathematical modeling of complex growth processes and expands the theoretical toolkit for analyzing hierarchical morphogenesis in natural systems.
Future developments could incorporate photosynthetic activity, couple the system with oceanic flow fields, and calibrate parameters using empirical coral growth data—thereby further integrating mathematical the ecological application.


\section*{Concluding Remarks}
We constructed a nonlinear reaction–diffusion system over $\mathbb{Z}_p$ to model internal calcification processes in coral colonies. 
The Vladimirov operator describes nonlocal diffusion within the coenosarc network, capturing the hierarchical and self-similar structure of branching corals. 
The model incorporates chemically grounded reactions between calcium and bicarbonate ions that drive the precipitation of calcium carbonate (\ce{CaCO3}), providing a quantitative framework to analyze internal chemical dynamics.

A current limitation of the framework is its reliance on a single prime $p$, which enforces a perfectly regular binary branching structure. 
Although the case $p=2$ simplifies computation and captures the essential hierarchical organization, it cannot reproduce the irregular branching patterns observed in natural coral colonies. 
Extending the model to mixed or irregular ultrametric trees would better approximate biological variability and represents a promising direction for future work.

A branching rule based on local \ce{CaCO3} accumulation, together with a halting condition linked to saturation thresholds, reproduces morphologies consistent with coral-like growth. 
Although these thresholds are externally defined, they are chemically motivated, reflecting resource depletion and the transition from reaction-dominated to diffusion-limited kinetics. 
Numerical simulations demonstrate how internal reaction–diffusion interactions within the coenosarc regulate nutrient redistribution, calcification rates, and the hierarchical architecture of the resulting structures.

While the model treats water activity as constant and neglects reversible or pH-dependent reactions, it provides a coherent first approximation of the internal kinetics governing coral calcification in a closed system. 
This framework establishes a foundation for future extensions incorporating photosynthetic activity, external ion exchange, or coupling with hydrodynamic transport.

Beyond coral systems, the proposed $p$-adic framework offers a new mathematical tool for studying morphogenesis in hierarchical or ultrametric domains. 
Possible extensions include parameter estimation from experimental datasets and reef-scale simulations relevant to conservation and climate resilience. 
By integrating non-Archimedean diffusion theory with nonlinear biochemical dynamics, the model advances the mathematical representation of internal calcification and hierarchical morphogenesis in natural systems.

\appendix
\section{Basic Facts on \texorpdfstring{$p$}{p}-Adic Analysis}\label{appe}

This section presents some results in $p$-adic analysis. First, we introduce the field of $p$-adic numbers and then describe some function spaces used to construct our biological model in~\autoref{Biochemical_model}. For readers interested in exploring these topics further, classical references include \citep{vladimirov_p-adic_1994}, \citep{taibleson_fourier_1975}, \citep{albeverio_theory_2010}, among others.
\subsection*{\texorpdfstring{The field of $p$-adic numbers}{The field of p-adic numbers}}

We fix a prime number $p$. In the field of rational numbers $\mathbb{Q}$, we define the \textit{$p$-adic absolute value} $|\cdot|_p:\mathbb{Q} \to \mathbb{R}$ by  

\begin{equation}\label{abs_value}
|x|_{p}=
\begin{cases}
0, & \text{if }x=0,\\
p^{-l}, & \text{if } x=p^{l}\dfrac{a}{b},
\end{cases}
\end{equation}

where $a$ and $b$ are integers coprime with $p$, and $a \cdot b \neq 0$, here $l$ is called the order of $x$ and is denoted as $\ord(x)$, in the case when $x=0$ we define $\ord(0):=+\infty$.  

This function satisfies the following properties for all $x, y \in \mathbb{Q}$:  
\begin{itemize}
    \item \textbf{Non-negativity:} $|x|_p \geq 0$, and $|x|_p = 0$ if and only if $x = 0$.
    \item \textbf{Multiplicativity:} $|xy|_p = |x|_p |y|_p$.
    \item \textbf{Ultrametric Inequality:} $\no{x+y}_p\le\max\set{\no{x}_p,\no{y}_p}$.
\end{itemize}
The ultrametric inequality implies the usual triangle inequality, thus confirming that $\no{\cdot}_p$ defines an absolute value on $\Q$.

The completion of $\mathbb{Q}$ relative to this absolute value \eqref{abs_value} is called the field of $p$-adic numbers and is denoted by $\mathbb{Q}_p$.  
Any $p$-adic number $x\neq 0$ can be written uniquely in the form:
\begin{equation}\label{p_serRep}
x=p^{ord(x)}\sum_{j=0}^{\infty}x_{j}p^{j},
\end{equation}

where $x_{j}\in\{0,1,2,\dots,p-1\}$ and $x_{0}\neq0$. By using this expansion,
we define \textit{the fractional part }$\{x\}_{p}$\textit{ of }$x\in
\mathbb{Q}_p$ as the rational number
\begin{equation*}
\{x\}_{p}=%
\begin{cases}
0 & \text{if }x=0\text{ or }\ord(x)\geq0\\
& \\
p^{\ord(x)}\sum_{j=0}^{-\ord(x)-1}x_{j}p^{j} & \text{if }\ord(x)<0.
\end{cases}
\end{equation*}
Also note that any $x\in\mathbb{Q}_p\smallsetminus\left\{  0\right\}$
can be represented uniquely as $x=p^{ord(x)}v$ where
$\no{v}_{p}=1$.

\subsection*{\texorpdfstring{Topology of $\mathbb{Q}_p$}{Topology of Qp}}

For $r\in\mathbb{Z}$, denote by $B_{r}(a)=\{x\in\mathbb{Q}_p;|x-a|_{p}\le
 p^{r}\}$ \textit{the ball of radius }$p^{r}$ \textit{with
center at} $a\in\mathbb{Q}_p$. 

Due to the ultrametric property, the geometry of $\mathbb{Q}_p$ differs drastically from that of the real line. For example, the unit ball centered at $0$, denoted by $\mathbb{Z}_p$, is both open and closed, and it forms a ring under the operations inherited from the field. This ring is called the \textit{ring of $p$-adic integers}. 

As a consequence of the ultrametricity, the space has the following properties:
\begin{itemize}
    \item Two balls in $\mathbb{Q}_p$ are either disjointed or contained in the other. 
    \item A subset of $\mathbb{Q}_p$ is
compact if and only if it is closed and bounded in $\mathbb{Q}_p$.
    \item In $\mathbb{Q}_p$
balls are compact subsets and $\left(  \mathbb{Q}_p%
,|\cdot|_{p}\right)$ becomes a locally compact topological space.
    \item $\left(  \mathbb{Q}_p,| \cdot |_{p}\right)$ is
totally disconnected, i.e. the only connected \ subsets of $\mathbb{Q}_p$ are the empty set and the points.
\end{itemize}

\subsection*{Some function spaces}
This section introduces some function spaces defined on $\q$.

\subsubsection*{The Bruhat-Schwartz space}

A complex-valued function $\varphi:\q\rightarrow \C$ is \textit{locally constant} if for any $x\in\mathbb{Q}_p$ there exists an integer $r=r(x)\in\mathbb{Z}$ such that
\begin{equation}
\varphi(x+x^{\prime})=\varphi(x)\text{ for any }x^{\prime}\in B_{r}.
\label{local_constancy}
\end{equation}

$\varphi$ is a \textit{Bruhat-Schwartz (test)} function if it is locally constant with compact support. These functions are linear combinations of characteristic functions of balls. The set containing them forms a $\mathbb{C}$-vector space, which we denote by $\tes(\mathbb{Q}_p):=\tes$.

Because $\varphi\in\tes(\mathbb{Q}_p)$ is locally constant, for every $x$ there exists $r(x)\in\Z$ satisfying ~\eqref{local_constancy}. Since it has compact support, it is possible to choose the largest $r(x)$, such a number denoted by $r=r(\varphi)$ is called \textit{the local constancy exponent (or constancy parameter) of} $\varphi$.

If $\varphi:\mathbb{Q}_p\rightarrow\R$, the space $\tes$ is a $\R$-vector space. We will use a subscript to denote this space, writing it as $\tes_{\R}$.
By $C(\q, \mathbb{R})$, we denote the vector space of all the continuous real-valued functions defined on $\q$.

This space is analogous to the Schwartz space in real analysis and is a natural domain for $p$-adic Fourier analysis.

\subsubsection*{\texorpdfstring{$L^{\rho}$ spaces}{Lrho spaces}}

$(\mathbb{Q}_p,+)$ is a locally compact topological group. By the Haar theorem, there exists a Haar measure in $\mathbb{Q}_p$, which is denoted by $dx$. This measure is invariant under translations, i.e.,
$d(x+a)=dx$. With this measure, we have the notion of integral in $\mathbb{Q}_p$. If we normalize this measure by the condition
$\int_{\mathbb{Z}_{p}}dx=1$, then $dx$ is unique.

For $\rho\in\lbrack 1,\infty)$, $L^{\rho}=L^{\rho}\left(
\mathbb{Q}_p\right),$ is the vector space of all the complex-valued functions $g:\q\rightarrow\C$ satisfying
\begin{equation*}
{\displaystyle\int\limits_{\mathbb{Q}_p}}
\left\vert g\left(  x\right)  \right\vert ^{\rho}dx<\infty.
\end{equation*}
We endow this space with the norm
\begin{equation*}
    \Nor{g}_{\rho}=\left({\displaystyle\int\limits_{\mathbb{Q}_p}}
\left\vert g\left(  x\right)  \right\vert ^{\rho}dx\right)^{1/{\rho}},
\end{equation*}
with this norm, the $L^{\rho}$ space becomes a Banach space \citep{taibleson_fourier_1975}(Chapter 2).

\subsection*{The Fourier transform}

Denote by $S$ the unit complex circle. The function $\chi_{p}:\p{\mathbb{Q}_p,+}\rightarrow \p{S,\cdot}$ defined as $\chi_p(y)=\exp(2\pi i\{y\}_{p})$ is an additive character on $\mathbb{Q}_p$, i.e., it is a continuous homomorphism of groups. 

The Fourier transform $\mathcal{F}:\tes\rightarrow\tes$ is defined as $\mathcal{F}(\varphi)=\widehat{\varphi}$ where,
\begin{equation*}
\widehat{\varphi}(\kappa)=
{\displaystyle\int\limits_{\mathbb{Q}_p}}
\chi_{p}(\kappa\cdot x)\varphi(x)dx\quad\text{for }\kappa\in\mathbb{Q}_p \text{ and } \varphi\in\tes.
\end{equation*}

This is a $\C$-linear isomorphism from $\tes(\mathbb{Q}_p)$ onto itself satisfying
\begin{equation}
(\mathcal{F}(\mathcal{F}(\varphi)))(x)=\varphi(-x), \label{Eq_FFT}%
\end{equation}
see e.g. \citep{albeverio_theory_2010}(Section 4.8). It extends to a unitary operator on $L^2(\mathbb{Q}_p)$, preserving inner products and mapping $L^2$ onto itself \citep{taibleson_fourier_1975}(Theorem 2.3). The extension and its inverse are given by
\begin{align*}  
\mathcal{F}(\varphi)(\kappa)&=\lim\limits_{r\rightarrow\infty}\int_{B_r}\varphi(x)\chi_p(\kappa\cdot x)dx\\
\mathcal{F}^{-1}(\varphi)(\kappa)&=\lim\limits_{r\rightarrow\infty}\int_{B_r}\varphi(x)\chi_p(-\kappa\cdot x)dx.
\end{align*}
\citep{vladimirov_p-adic_1994}(Chapter 1, section VII.4). We will also use the notation $\mathcal{F}_{x\rightarrow\kappa}(\varphi)$ for the Fourier transform of $\varphi(x)$. 

\subsection*{The Vladimirov operator}\label{Vladop}

Because the ordinary derivative is not defined in a meaningful way for locally constant functions, it is necessary to define an analog of the differentiation operator. To this end, we work with pseudo-differential operators. One such operator is the Vladimirov operator.

Given $\alpha>0$, for $u\in\tes(\mathbb{Q}_p)$, the Vladimirov operator denoted by ${\boldsymbol{D}}^{\alpha}$ is defined as
\begin{equation}
   \boldsymbol{D}^\alpha u(x) := \frac{1-p^{\alpha}}{1-p^{-\alpha-1}} \int_{\mathbb{Q}_p} \frac{u(y) - u(x)}{|x-y|_p^{1+\alpha}} \, dy,
\end{equation}

${\boldsymbol{D}}^{\alpha}$ computes a weighted average of the difference between $u$ at $x$ and all other points, and thus it is considered analogous to the Laplacian. Additionally, it is possible to rewrite the Vladimirov operator as

\begin{equation}\label{Vla1def}
    {\boldsymbol{D}}^{\alpha}u(x) =\mathcal{F}^{-1}_{k\rightarrow x}\left(|k|^{\alpha}_p\mathcal{F}_{x\rightarrow k}u \right).
\end{equation}

\citep[Chapter 2, section IX]{vladimirov_p-adic_1994}. In the case of $\alpha=2$, we have 
\begin{equation}\label{Vla2def}
    \mathcal{F}_{x\rightarrow k}({\boldsymbol{D}}^{2}u(x)) =|k|^{2}_p\mathcal{F}_{x\rightarrow k}(u(x)).
\end{equation}
Except for the sign, the last equation coincides with the equation that satisfies the second derivative operator when $u$ belongs to the classical real Schwartz space. Also, analog properties satisfied by the classical Laplacian hold for this operator \citep{kochubei_vladimirovtaibleson_2023}.

We are interested in ${\boldsymbol{D}}^{\alpha}$ acting on balls, specifically on the unit ball $\mathbb{Z}_p$. The restriction of this operator to balls was previously studied in \citep[Chapter 2, Section X.5]{vladimirov_p-adic_1994} and \citep{bikulov_complete_2019}. The case of $\mathbb{Z}_p$, was explored in \citep{chacon-cortes_turing_2023}. Let $u(x) \in \tes_{\R}(\mathbb{Z}_p)$ and $x \in \mathbb{Z}_p $. Then,
\begin{align*}
{\boldsymbol{D}}^{\alpha} u(x) 
=\frac{1-p^{\alpha}}{1-p^{-\alpha-1}}\int\limits_{\mathbb{Z}_p}\frac{u(y)-u(x)}{\no{x-y}_p^{\alpha+1}}dy+\frac{p^{\alpha}(p-1)}{p^{\alpha+1}-1}u(x)\\
\end{align*}

Thus, the restriction of ${\boldsymbol{D}}^{\alpha}$ to $\tes_{\R}(\z)$ satisfies 
\begin{equation*}
   \left({\boldsymbol{D}}^{\alpha}-\mu\right)u(x)=\frac{1-p^{\alpha}}{1-p^{-\alpha-1}}\int\limits_{\mathbb{Z}_p}\frac{u(y)-u(x)}{\no{x-y}_p^{\alpha+1}}dy,
\end{equation*}
where $\mu=\frac{p^{\alpha}(p-1)}{p^{\alpha+1}-1}$. From now on we will work with the operator $\overline{\boldsymbol{D}}^{\alpha}:= \left({\boldsymbol{D}}^{\alpha}-\mu\right)$.

In 2002, Kozyrev started the p-adic wavelet theory with his paper \citep{kozyrev_wavelet_2002}, where he introduced a basis of complex-valued wavelets with compact support in $L^2(\mathbb{Q}_p)$. The elements of the basis have the form 
\begin{equation}
\Psi_{r,j,n}(x)=p^{-\frac{r}{2}}\chi_p\left(p^{r-1}jx\right)\mathbf{1}_{B_{r}\left(p^{-{r}}n\right)}(x)
\end{equation}
where $r\in\mathbb{Z}$, $j\in\set{1,\dots,p-1},$ and $n$ runs through a fixed set of representatives of $\mathbb{Q}_p/\mathbb{Z}_p$. These functions are eigenfunctions of the operator ${\boldsymbol{D}}^{\alpha}$ with eigenvalue $p^{\left(1-r\right)\alpha}$ \citep{khrennikov_ultrametric_2018}(Theorem 3.29).

In \citep{bikulov_complete_2019}, \citep{chacon-cortes_turing_2023}, and \citep{zuniga-galindo_non-archimedean_2021}, it is considered the eigenvalue problem in balls. In the unit ball, the eigenvalue problem is given by
\begin{equation}\label{eigenv2}
\begin{cases}
    \overline{\boldsymbol{D}}^{\alpha}u(x)=ku(x) & x\in \z, \quad t\ge 0\\
u\in L^2\left(\mathbb{Z}_p\right).
\end{cases}
\end{equation}

In \citep{zuniga-galindo_eigens_2022}, the author showed that the set of functions
\begin{equation}
    \set{\boldsymbol{1}_{\mathbb{Z}_p}}\bigcup N_{r,j,n}
\end{equation}

where $N_{r,j,n}=\bigcup\limits_{r,j,n}\left\{\Psi_{r,j,n}(x); j\in\set{1,\dots,p-1}, r\le 0 , n\in p^r\mathbb{Z}_p\cap \mathbb{Q}_p/\mathbb{Z}_p\right\}$
is an orthonormal basis of $L^2(\mathbb{Z}_p)$, and functions belonging to $N_{r,j,n}$ are eigenfunctions of $\overline{\boldsymbol{D}}^{\alpha}$ with eigenvalue $p^{(1-r)\alpha}$.

\subsubsection*{Discretized system}
We take the set $G_m$ of $p$-adic integers of the form:
\begin{align*}
i = i_0 + i_1 p + \dots + i_{m-1} p^{m-1},
\end{align*}
where $i_k \in \{0,1,\dots,p-1\}$ for each $k \in \{0,1,\dots,m-1\}$.

Let $\boldsymbol{1}_{B_{-m}(i)}(x)$ be the characteristic function of the ball centered at $i \in G_m$ with radius $p^{-m}$. The elements of this ball have the form $i + p^my$ with $y\in\mathbb{Z}_p$. Note that for $i \neq j$, the balls ${B_{-m}}(i)$ and ${B_{-m}}(j)$ are disjoint. In fact $\no{i-j}_p>p^{-m}$.

By $X_m$, we denote the $\mathbb{R}$-vector space consisting of all the test functions in the form:
\begin{equation*}
\varphi(x) = \sum_{i\in G_m}\varphi(i)\boldsymbol{1}_{B_{-m}}{\left(i\right)}.
\end{equation*}
where $\varphi(i) \in \mathbb{R}$. This is a Banach space with the supremum norm.

Now, consider the following system:
\begin{align}\label{System_app}
\begin{split}
    \frac{\partial{u}({t},x)}{\partial {t}}&=-\boldsymbol{\overline{D}}^{\alpha}{u}({t},x)-{u}({u}-{v}+\sigma-\beta)\\
    \frac{\partial {v}({t},x)}{\partial {t}}&=-d\boldsymbol{\overline{D}}^{\alpha}{v}({t},x)-\eta{v}({v}-{u}+\beta)^2\\
    \frac{\partial {w}({t},x)}{\partial {t}}&=\eta{v}({v}-{u}+\beta)^2,
\end{split}
\end{align}

The Vladimirov operator, when restricted to functions in the space $X_m$, is represented by the matrix $L^{\alpha} = [L_{ij}^{\alpha}]$ of size $\#G_m \times \#G_m$, where:

 \begin{equation}\label{eq:Lij:A}
    L_{ij}^{\alpha}=
    \begin{cases}
        \frac{(1-p^{\alpha})p^{-m}}{1-p^{-\alpha-1}}\frac{1}{\no{i-j}_p^{\alpha+1}} &\text{ if } i\neq j\\
       -\frac{(1-p^{\alpha})p^{-m}}{1-p^{-\alpha-1}}\displaystyle\sum_{\substack{k\in G_m\\k\ne i}}\frac{1}{|i-k|_p^{\alpha+1}} &\text{ if } i= j.
    \end{cases}
\end{equation}
In the particular case when $\alpha=2$
\begin{equation}\label{eq:Lij:B}
    L_{ij}^{2}=
    \begin{cases}
        \frac{p^{3-m}(1-p^2)}{p^3-1}\frac{1}{\no{i-j}_p^{3}} &\text{ if } i\neq j\\
       -\frac{p^{3-m}(1-p^2)}{p^3-1}\displaystyle\sum_{\substack{k\in G_m\\k\ne i}}\frac{1}{|i-k|_p^{3}} &\text{ if } i= j.
    \end{cases}
\end{equation}

Equation~\eqref{eq:Lij:B} provides the discrete matrix representation of the restricted Vladimirov operator 
$D^{\alpha}$ acting on the finite-dimensional space $X_m$ of locally constant functions supported on disjoint 
$p$-adic balls of radius $p^{-m}$. Each index $i \in G_m$ labels one of these balls, which in our biological 
interpretation correspond to the coral branches.

The off-diagonal entries $L^{\alpha}_{ij}$ ($i \neq j$) quantify the nonlocal diffusive coupling between 
branches $i$ and $j$, with an intensity that decays as a power law of the $p$-adic distance 
$|i - j|_p^{-(\alpha + 1)}$. The prefactor 
\[
\frac{(1 - p^{\alpha}) p^{-m}}{1 - p^{-\alpha - 1}}
\]
arises from the normalization of the Vladimirov integral with respect to the Haar measure on $\mathbb{Z}_p$, ensuring 
that the discretization preserves the correct scaling and nonlocal structure of the continuous operator.

The diagonal terms are defined so that each row of $L^{\alpha}$ sums to zero, i.e.,
\[
\sum_{j \in G_m} L^{\alpha}_{ij} = 0,
\]
which guarantees conservation of total mass under diffusion. This condition is directly analogous to the 
construction of the discrete Laplacian in Euclidean lattices, where diagonal elements compensate for all 
outgoing fluxes to neighboring nodes.

Consequently, $L^{\alpha}$ acts as the generator of diffusion on the truncated $p$-adic tree of depth $m$, 
providing a consistent finite-dimensional approximation of the Vladimirov operator on $\mathbb{Z}_p$. In the context of our model, this operator governs the nonlocal exchange of ions between coral branches in the hierarchical structure.

Consider the functions $u,v$ belonging to $X_m$, then they take the form:
  \begin{align*}
   u(t,x) &= \sum\limits_{i\in G_m}u(t,i)\boldsymbol{1}_{B_{-m}(i)}(x), \\
   v(t,x) &= \sum\limits_{i\in G_m}v(t,i)\boldsymbol{1}_{B_{-m}(i)}(x),  \\
   \end{align*}
Since, for $i\neq j$ belonging to $G_m$, $\boldsymbol{1}_{B_{-m}(i)}(x) \cdot \boldsymbol{1}_{B_{-m}(j)}(x) = 0 $, a straightforward calculation gives:
 \begin{align*}
&  f(u(t,x),v(t,x)) = f\left(\sum\limits_{i\in G_m}u(t,i)\boldsymbol{1}_{B_{-m}(i)}(x), \sum\limits_{i\in G_m}v(t,i)\boldsymbol{1}_{B_{-m}(i)}(x)\right) \\
  &= -\sum\limits_{i\in G_m}{u(t,i)}({u(t,i)}-{v(t,i)}+\sigma-\beta) )\boldsymbol{1}_{B_{-m}(i)}(x)\\
   &= \sum\limits_{i\in G_m}f(u(t,i),v(t,i))\boldsymbol{1}_{B_{-m}(i)}(x). 
   \end{align*}

For $g$ and $h$, we can do similar calculations. Hence, functions $f(u,v), g(u,v)$ and $h(u,v)$ are represented as vector-valued functions: 
$F = [f(u(t,i), v(t,i))]_{i \in G_m} $, $G = [g(u(t,i), v(t,i))]_{i \in G_m} $ and $H = [h(u(t,i), v(t,i))]_{i \in G_m}$, with
\begin{align*}
       f(u(t,i),v(t,i)) &= -u(t,i)   (u(t,i) - v(t,i) + \sigma- \beta )\\
       g(u(t,i),v(t,i)) &= -\eta  v(t,i)  ( v(t,i) -u(t,i) + \beta)^2\\
       h(u(t,i),v(t,i)) &= \eta v(t,i)  ( v(t,i) -u(t,i) + \beta)^2;
   \end{align*}
The discretized version of~\eqref{System_app} consists of solving a system of $3p^m$ ordinary differential equations of the form:

\begin{equation}\label{discr}
\begin{aligned}
\frac{\partial u(t,i)}{\partial t} &= 
- \frac{1 - p^\alpha}{1 - p^{-\alpha - 1}} \left( 
\sum_{\substack{j \in G_m \\ j \ne i}} \frac{p^{-m} u(t,j)}{|i - j|_p^{\alpha + 1}} 
- u(t,i) \sum_{\substack{j \in G_m \\ j \ne i}} \frac{p^{-m}}{|i - j|_p^{\alpha + 1}} 
\right)
+ f(u(t,i), v(t,i)), \\[6pt]
\frac{\partial v(t,i)}{\partial t} &= 
- \frac{1 - p^\alpha}{1 - p^{-\alpha - 1}} \left( 
\sum_{\substack{j \in G_m \\ j \ne i}} \frac{p^{-m} v(t,j)}{|i - j|_p^{\alpha + 1}} 
- v(t,i) \sum_{\substack{j \in G_m \\ j \ne i}} \frac{p^{-m}}{|i - j|_p^{\alpha + 1}} 
\right)
+ g(u(t,i), v(t,i)), \\[6pt]
\frac{\partial w(t,i)}{\partial t} &= 
h(u(t,i), v(t,i)).
\end{aligned}
\end{equation}

In matrix form, this is equivalent to solving 
\begin{equation}\label{Matrix}
\begin{cases}
\displaystyle \frac{d\mathbf{u}}{dt} = L^{\alpha} \mathbf{u}(t) + F, \\[10pt]
\displaystyle \frac{d\mathbf{v}}{dt} = L^{\alpha} \mathbf{v}(t) + G, \\[10pt]
\displaystyle \frac{d\mathbf{w}}{dt} = H,
\end{cases}
\end{equation}
where $\mathbf{u}(t)=[u(i,t)]_{i\in G_m}, \quad\mathbf{v}(t)=[v(i,t)]_{i\in G_m}$ and $ \mathbf{w}(t)=[w(i,t)]_{i\in G_m}$. 

\begin{proposition}
The coupled system of ordinary differential equations~\eqref{Matrix} with initial conditions \( \mathbf{u}(t_0) = \mathbf{u}_0 \), \( \mathbf{v}(t_0) = \mathbf{v}_0 \), \( \mathbf{w}(t_0) = \mathbf{w}_0 \), admits a unique solution
\[
(\mathbf{u}, \mathbf{v}, \mathbf{w}) \in C^1((t_0 - \delta, t_0 + \delta), \mathbb{R}^{3n}).
\] with $\delta>0$.
\end{proposition}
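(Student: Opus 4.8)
The plan is to recast the three coupled vector equations in~\eqref{Matrix} as a single autonomous first-order system on $\mathbb{R}^{3n}$, where $n = \#G_m = p^m$, and then apply the Picard--Lindel\"of (Cauchy--Lipschitz) existence--uniqueness theorem. First I would stack the unknowns into one vector $\mathbf{y}(t) = (\mathbf{u}(t), \mathbf{v}(t), \mathbf{w}(t)) \in \mathbb{R}^{3n}$ and encode the right-hand side as a single map $\Phi \colon \mathbb{R}^{3n} \to \mathbb{R}^{3n}$, so that the initial value problem reads $\dot{\mathbf{y}} = \Phi(\mathbf{y})$ with $\mathbf{y}(t_0) = (\mathbf{u}_0, \mathbf{v}_0, \mathbf{w}_0)$. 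The map $\Phi$ decomposes as a linear part, the block operator $\mathrm{diag}(L^{\alpha}, L^{\alpha}, 0)$ carrying the diffusive coupling, plus a reaction part built from the scalar functions $f$, $g$, $h$ evaluated branchwise.

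The key step is to verify that $\Phi$ is continuous and locally Lipschitz. The linear part is globally Lipschitz with constant $\Nor{L^{\alpha}}$, since $L^{\alpha}$ is a fixed real matrix. The reaction terms $f(u,v) = -u(u - v + \sigma - \beta)$, $g(u,v) = -\eta v(v - u + \beta)^2$ and $h(u,v) = \eta v(v - u + \beta)^2$ are polynomials in their arguments, hence $C^{\infty}$ and therefore locally Lipschitz on every bounded subset of $\mathbb{R}^{3n}$. Being a finite sum of such pieces, $\Phi \in C^{\infty}(\mathbb{R}^{3n}, \mathbb{R}^{3n})$; in particular it is continuous and locally Lipschitz. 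I would make this quantitative by fixing a closed ball $\overline{B}(\mathbf{y}_0, \rho)$ and bounding the Jacobian of $\Phi$ on it, which yields an explicit local Lipschitz constant $\Lambda = \Lambda(\rho)$.

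With continuity and the local Lipschitz bound in hand, I would invoke Picard--Lindel\"of to obtain $\delta > 0$ and a unique solution $\mathbf{y} \in C^1((t_0 - \delta, t_0 + \delta), \mathbb{R}^{3n})$ of the initial value problem; unpacking the coordinates recovers the asserted triple $(\mathbf{u}, \mathbf{v}, \mathbf{w})$. The $C^1$ regularity is immediate because $\dot{\mathbf{y}} = \Phi(\mathbf{y})$ is continuous in $t$ along the solution, and one could in fact bootstrap to $C^{\infty}$ using the smoothness of $\Phi$.

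I do not expect a genuine analytic obstacle: the vector field is polynomial, so there is no difficulty with continuity or differentiability. The only point requiring care is that the reaction nonlinearities are quadratic and cubic and hence merely \emph{locally}, not globally, Lipschitz; this is precisely why the conclusion is a local solution on $(t_0 - \delta, t_0 + \delta)$ rather than a global one. Accordingly, the sole computation to carry out is the local Lipschitz estimate on the closed ball around $\mathbf{y}_0$, which is routine.
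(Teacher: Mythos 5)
Your proposal is correct and follows essentially the same route as the paper's own proof: split the right-hand side into the linear diffusive part given by the matrix $L^{\alpha}$ and the polynomial reaction terms, observe that this sum is locally Lipschitz, and conclude by Picard--Lindel\"of. Your version merely adds some welcome explicitness (stacking the unknowns into $\mathbf{y}\in\mathbb{R}^{3n}$ and bounding the Jacobian on a ball to get a concrete Lipschitz constant), but there is no substantive difference in the argument.
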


\begin{proof}
The matrix \( L^\alpha \) defines a continuous linear operator. The functions \( f, g, h \) are smooth polynomials in \( u \) and \( v \), thus \( F, G, H \) are locally Lipschitz functions from \( \mathbb{R}^{2n} \) to \( \mathbb{R}^n \). Therefore, the right-hand side of the system defines a local Lipschitz vector field on \( \mathbb{R}^{3n} \), and the result follows the Picard–Lindelöf Theorem.
\end{proof}

This discretized system corresponds to the numerical implementation used in~\autoref{Numerical_sol}, where coral branching is simulated dynamically.


\section*{Funding and conflict of interest}
The research leading to these results received funding from Secihti-Mexico under the Grant Agreement Fronteras 2019-217367.\\
The authors have no conflicts of interest to declare relevant to this article's content.

\clearpage
\bibliographystyle{elsarticle-num}
\bibliography{references1}
\end{document}